\newtheorem{theorem}{Theorem}
\newtheorem{lemma}[theorem]{Lemma}
\theoremstyle{definition}
\newtheorem{definition}{Definition}
\theoremstyle{remark}
\newtheorem{assumption}{Assumption}
\numberwithin{equation}{section}
\numberwithin{theorem}{section}
\numberwithin{example}{section}
\numberwithin{definition}{section}
\numberwithin{figure}{section}
\newcommand{\bE}{{\mathbb{E}}}
 \newcommand{\barPhi}{\bar{\Phi}}
\newcommand{\assumpref}[1]{Assumption~\ref{assump:#1}}
\newcommand{\assumpsref}[1]{Assumptions~\ref{assump:#1}}
\newcommand{\assumpssref}[1]{\ref{assump:#1}}
\newcommand{\secref}[1]{Section~\ref{sec:#1}}
\newcommand{\secsref}[1]{Sections~\ref{sec:#1}}
\newcommand{\secssref}[1]{\ref{sec:#1}}
\newcommand{\appref}[1]{Appendix~\ref{app:#1}}
\newcommand{\defref}[1]{Definition~\ref{def:#1}}
\newcommand{\lemref}[1]{Lemma~\ref{lem:#1}}
\newcommand{\thmref}[1]{Theorem~\ref{thm:#1}}
\newcommand{\tabref}[1]{Table~\ref{tab:#1}}
\newcommand{\tabsref}[1]{Tables~\ref{tab:#1}}
\newcommand{\tabssref}[1]{\ref{tab:#1}}
\title[]{Asymptotic false discovery control of the Benjamini-Hochberg procedure for pairwise comparisons}
\author[W.~Liu]{Weidong Liu}
\address{School of Mathematical Sciences, Shanghai Jiao Tong University}
\email{weidongl@sjtu.edu.cn}
\author[D.~Leung]{Dennis Leung}
\address{School of Mathematics and Statistics, University of Melbourne}
\email{dennis.leung@unimelb.edu.au}
\author[Q.~Shao]{Qi-Man Shao}
\address{Department of Statistics and Data Science, Southern University of Science and Technology}
\email{shaoqm@sustech.edu.cn}
\begin{document}

\begin{abstract}

%
%

In a one-way analysis-of-variance (ANOVA) model, the number of all pairwise comparisons can be large even when there are only a moderate number of groups. Motivated by this, we consider a regime with a growing number of groups, and prove that for testing pairwise comparisons the  BH procedure \citep{BH1995}  can offer asymptotic control on false discoveries, despite that the  t-statistics involved do not exhibit the well-known positive dependence structure called the PRDS to guarantee exact false discovery rate (FDR) control \citep{BY2001}. Sharing \citet{tukey1991philosophy}'s viewpoint that the difference in the means of any two groups cannot be exactly zero, our main result is stated in terms of the control on the directional false discovery rate and directional false discovery proportion \citep{BenAndDan2005}. A key technical contribution is that  we have shown the dependence among the t-statistics to be weak enough to induce a convergence result typically needed for establishing asymptotic FDR control. Our analysis does not adhere to stylized assumptions such as normality, variance homogeneity and a balanced design, and thus provides a theoretical grounding for applications in more general situations.

\end{abstract}

\keywords{}

\subjclass[2000]{62H05}

\maketitle

\section{Introduction} \label{sec:intro}

 Suppose we have $m$ independent groups of observations $\mathcal{X}_{i}=\{X_{ki},1\leq k\leq n_{i}\}$, $1 \leq i \leq m$, where for each $i$, $X_{ki}$'s are independent and identically distributed  random variables with mean $\mu_{i}$ and variance $\sigma_i^2$. The pairwise comparison problem
\begin{equation}\label{a0}
H_{ij}:~\mu_{i}=\mu_{j}\quad \mbox{against}\quad K_{ij}:~\mu_{i}\neq \mu_{j}, \ \ 1 \leq i < j \leq m
\end{equation}
has been widely studied since \citet{tukey1953PMC}'s early work on  multiple comparisons. In the early days, \citet{tukey1953PMC} and \citet{Kramer} independently proposed their famous procedure for testing \eqref{a0} based on the studentized range distribution, with the goal of  controlling the  family-wise error rate (FWER). Such developments culminated in the work of \citet{Hayter} who established the conservativeness of the Tukey-Kramer procedure in the affirmative. In modern applications when $m$ can be large,  the number of hypotheses $q:= {m \choose 2} = m(m-1)/2$ to consider is  even larger, making any testing procedure aiming to control FWER too conservative to be useful.  Hence there is a strong case for using the false discovery rate (FDR) proposed by \citet{BH1995}, defined as the expectation of the false discovery proportion (FDP),
\[
\frac{\# \text{hypotheses incorrectly rejected}}{\# \text{hypotheses rejected}},
\]
as a more appropriate type 1 error measure for pairwise comparisons,  given its scalability to the number of rejections (``discoveries") made.


The original step-up procedure proposed in \citet{BH1995} (widely known as the BH procedure, or BH for short) is proven to control the FDR at a pre-specified level $0 < \alpha < 1$ \emph{ when the test statistics are independent}.  However, the test statistics for  the pairwise comparison problem \eqref{a0}, namely, the two-sample t-statistics,
\begin{equation} \label{2samplet}
T_{ij}=\frac{\bar{X}_{i}-\bar{X}_{j}}{\sqrt{\hat{\sigma}^{2}_{i}/n_{i}+\hat{\sigma}^{2}_{j}/n_{j}}}, \ \ 1 \leq i \neq  j \leq m,
\end{equation}
where $\bar{X}_{i}=\frac{1}{n_{i}}\sum_{k=1}^{n_{i}}X_{ki}$ is the sample mean and $\hat{\sigma}^{2}_{i}=\frac{1}{n_{i}-1}\sum_{k=1}^{n_{i}}(X_{ki}-\bar{X}_{i})^{2}$ is the sample variance for group $i$, are apparently dependent in a specific pattern. If for each unique index pair $i < j$, we let
\begin{equation} \label{1sidepv}
p_{ij, U}=1- F_{ij}(T_{ij}) \text{ and } p_{ij, L}=F_{ij}(T_{ij}),
\end{equation}
be the one-sided upper and lower tailed p-values of $T_{ij}$,  where $F_{ij}$ is the (approximate) cumulative distribution function of $T_{ij}$ under $H_{ij}$,
the BH procedure would stipulate that we first sort
all the two-sided p-values,  defined as
\begin{equation} \label{2sidepv}
p_{|ij|} = 2 \min(p_{ij, L}, p_{ij, U}) \text{ for each pair } i< j,
\end{equation}
into their order statistics
\[
 p_{(1)}\leq p_{(2)}\leq\ldots\leq p_{(q)},
 \]
 and reject the null hypothesis $H_{ij}$ whenever $p_{|ij|}\leq p_{(\hat{k})}$ for $
\hat{k}:=\max\{i: ~p_{(i)}\leq i\alpha/q\}
$ (and if $\hat{k}$ is the maximum over an empty set then no rejection is made). Despite the dependence among the statistics in \eqref{2samplet}, many studies suggested that the BH procedure can still provide valid FDR control based on extensive simulations \citep*{williams, keselman1999pairwise, blair1995improved}.
%
The subsequent well-cited work of \citet{BY2001} has proven the validity of BH under ``positive dependence" of the test statistics. Unfortunately, this particular dependence  condition, widely known as PRDS in the literature, is not satisfied by the  two-sample t-statistics in \eqref{2samplet}; see  \citet[Example 2.6]{yekutieli2008false}. Indeed, even under the seemingly innocuous setup with a balanced design, as well as normality and homogeneity of error variances, establishing exact FDR control of the BH procedure for pairwise comparisons remains  a hard open problem, as
\citet[p.1882]{BY2001} also pointed out: ``\emph{Another important
open question is whether the same procedure controls the FDR when
testing pairwise comparisons of normal means, either Studentized or not}."

In real applications, the number of tests $q$ can become large for only a moderate number of groups $m$; for example, $(m,q)=(10,45)$ in \citet{oishi1998measurement},
 $(m,q)=(41,820)$ in  \citet{williams} and $(m,q)=(72,2556)$ in \citet{pawluk2006application}.
Motivated by this, we here pursue another avenue and show,  from an empirical-process viewpoint,  that the BH procedure is asymptotically valid for the pairwise comparison problem when $m$ tends to $\infty$ at a rate controlled by the available sample size. This approach is perhaps best represented by the FDR works of  \citet{storey2004strong} and \citet{genovese2004stochastic}, and a key ingredient in proving BH's asymptotic validity is the convergence of an underlying empirical process, which is  \emph{assumed} to hold under some general weak dependence among the test statistics \citep[p.193]{storey2004strong}.  In this respect,  one main offering  of our work is that we have shown
the dependence among the two-sample t-statistics in \eqref{2samplet} to be ``weak enough" to warrant such  convergence in the form of a uniform weak law of large numbers (\secref{ULLNpf}), which is not immediately obvious and also naturally leads to results on the asymptotic control of the FDP; compare \citet[Theorem 6]{storey2004strong}. Moreover, an asymptotic treatment allows us the flexibility to do away with standard assumptions such as normality, variance homogeneity and a balanced design, which can be easily violated in applications.

Originally,  \citet{tukey1991philosophy, tukey1962future} argued that, for pairwise comparisons,  the difference $\mu_i - \mu_{j}$ for any pair $(i, j)$  can never be exactly zero in reality and at best be
close to zero to many decimal points. As such a type 1 error of rejecting a true null hypothesis $H_{ij}$ can never occur. Nevertheless, one can still make sense of the BH procedure by making a slight modification: Following the rejection of   a hypothesis $H_{ij}$,
a declaration of  the sign of the difference $\mu_i - \mu_j$ must be made by the practitioner.  If 
$T_{ij} > 0$, then $\text{sgn}(\mu_i - \mu_j)$ will be declared as positive, and vice versa.
Making a wrong sign declaration will constitute what is known as a  \emph{directional error}.
Following more recent works such as \citet{benjamini2000adaptive} and \citet{BenAndDan2005},  we acknowledge the possibility that some null hypotheses in \eqref{a0} could in fact be true, and define a directional error more generally as follows:
\begin{definition} \label{def:type3}
A directional error for a hypothesis $H_{ij}$ is made if,  either  $\text{sgn}(\mu_i - \mu_j)$ is incorrectly declared after a rejection of $H_{ij}$ when $\mu_i - \mu_j \not= 0$, or  $H_{ij}$ is rejected when $\mu_i - \mu_j = 0$.
\end{definition}
Hence, a directional error  is equivalent to a usual type 1 error for $H_{ij}$ when $\mu_i - \mu_j = 0$.
Under \defref{type3}, we formally define \emph{the directional false discovery proportion} (dFDP)  as
\begin{equation} \label{newFDR}
dFDP = \frac{
\#\{ (i, j): \text{a directional error is made for $(i, j)$}\}
}{\#\{(i, j): H_{ij} \text{ is rejected} \}}
\end{equation}
and the directional false discovery rate (dFDR) as the expectation thereof. To accommodate Tukey's viewpoint, our main result (\thmref{main})  is thus stated in terms of dFDR and dFDP control. To the best of our knowledge, asymptotic results on dFDR/dFDP control have not appeared elsewhere in the literature. While our techniques can certainly be employed to prove a similar theorem in terms of the original FDR,  it is instructive to demonstrate how they can be adopted to prove a result for dFDR, using the pairwise comparison problem as a showcase.

\subsection{Organization and notation}
\secref{main} states and discusses our main result, which is immediately followed by its proof that relies on a key uniform weak law of large numbers proved later in \secref{ULLNpf}. \secsref{numeric} conducts a simple numerical study to shed light on our main result, while \secssref{conclude} concludes with a discussion of open issues. To fix notation,  we  let $n := \max_i n_i$. Throughout, $C, c$ are  positive constants whose values, unless otherwise specified,  are understood to not depend on $(m, n)$, and may vary from place to place.  The dependence of $m = m_n$ on $n$ is implicit.
The ``big-O,  little-o" notation is as usual: For two sequences of real numbers $\{a_n\}$ and $\{b_n\}$, $a_n = O(b_n)$ means that $|a_n/b_n|$ is bounded, and $a_n = o(b_n)$ means that $a_n/b_n \longrightarrow 0$ as $m,n \longrightarrow \infty$.
The notation $Card(A)$ denotes the cardinality of a set $A$. $\Phi(\cdot )$ is the cumulative distribution function of the standard normal and $\barPhi(\cdot) := 1 - \Phi(\cdot)$; $\phi(\cdot)$ is the standard normal density. $\log_d$ means taking $\log$ for $d$ times and for two functions $f$ and $g$, $f(x) \sim g(x)$, $x \rightarrow \infty$ means $f(x)/g(x) \longrightarrow 1$ as $x$ goes to infinity. For real numbers $a, b, \in \mathbb{R}$, $a\vee b$ and $a \wedge b$ are shorthand for $\max(a, b)$ and $\min(a, b)$;  if $a$ and $b$  are non-negative, $a \lesssim b$ means there
exists a constant $C$ not depending on $(m, n)$ such that $a \leq Cb$ holds. The Euclidean norm of a vector ${\bf x} \in \mathbb{R}^p$ is denoted as $\|{\bf x}\|$.

\section{Main results} \label{sec:main}

We first formally
define the version of the BH procedure accompanied by sign declaration upon the rejection of a null hypotheses in  \eqref{a0}.

\begin{definition}[Level-$\alpha$ BH with sign declaration for pairwise comparisons] \label{def:dBH}
\ \
\begin{enumerate}
\item Obtain the two-sided p-values $p_{|ij|}$ according to \eqref{2samplet}, \eqref{1sidepv} and \eqref{2sidepv}, and sort them as the order statistics  $ p_{(1)}\leq p_{(2)}\leq\ldots\leq p_{(q)}$, where $q = m(m-1)/2$.
\item Let $\hat{k}=\max\{k: ~p_{(k)}\leq k\alpha/q\}$, and reject $H_{ij}$ whenver $p_{|ij|} \leq p_{(\hat{k})}$. No rejection will be made if $\hat{k}$ cannot be properly defined .
\item For each rejected $H_{ij}$, declare $\text{sgn}(\mu_i - \mu_j) > 0 $ if $T_{ij} >0$, and declare $\text{sgn}(\mu_i - \mu_j) < 0 $ if $T_{ij}  <0$.
\end{enumerate}

\end{definition}

Note that the $F_{ij}$'s for forming the p-values do not have to be the exact null distribution functions of the $T_{ij}$'s. Our main result, \thmref{main} below, establishes the validity of the above BH procedure under the following assumptions:

\begin{assumption} [Approximate balance and homogeneity]
\label{assump:balance_homo}
There exist  $0 < c_L \leq 1 \leq c_U $ such that for all $1 \leq i \not = j \leq m$,
\[
c_L \leq  \sigma_i^2/\sigma_j^2 , \ \ n_i/n_j \leq  c_U.
\]
\end{assumption}

\begin{assumption} [ Asymptotic regime and moments]
 \label{assump:moment_regime}
Suppose that for some constants $r, C, \nu, K>0$,
\[
m \leq C n^r \text{ and } \max_{1 \leq i \leq m} \mathbb{E}|(X_{1i} - \mu_i)/\sigma_i |^{2 + 4r  + \nu} \leq K
\]
\end{assumption}

\begin{assumption}[Uniform Cram\'er-type moderate deviations of the reference null distributions]  
 \label{assump:ref_null}
 For all $i < j$, $F_{ij}$'s are monotone increasing distribution functions, with the symmetric properties that $F_{ij}(t) = 1 -  F_{ij}(- t)$ for $t \geq 0$ and
\[
\max_{1 \leq i < j \leq m} \sup_{0 \leq t \leq 2 \sqrt{\log m}    } \left|  \frac{1 - F_{ij}(t)}{\bar{\Phi}(t)} - 1\right| = O(n^{-c}) \text{ as } m, n \longrightarrow \infty
\]
for some $c>0$, where the possible dependence of the distribution functions $F_{ij}$ on $(m, n_1, \dots, n_m)$ is suppressed for notational simplicity.
\end{assumption}

These are very modest assumptions. \assumpref{balance_homo} allows for an imbalanced design and variance non-homogeneity in a controlled manner, while \assumpref{moment_regime} allows the growth of $m$ in $n$ to be of a polynomial order that  depends on the variables' moments. In particular, $4r + \nu$ can in fact be less than $1$, despite the commensurate limited growth rate of $m$ in that case. In \assumpref{ref_null}, the symmetry property of
a  reference null distribution functions $F_{ij}$ covers the usual normal and Student's t distributions. The same assumption also requires them to be approximately normal in a uniform way. This later condition trivially holds when all $F_{ij}$'s are taken to be $\Phi(\cdot)$. By the Cram\'er-type moderate deviation for  t-statistics \citep{Jing}, it will also hold when the $F_{ij}$'s are taken as  t-distribution functions with, say, $\min(n_i ,  n_j) - 1$ degrees of freedom \citep{scheffe}, under the approximately balanced design and asymptotic regime specified in \assumpsref{balance_homo} and \assumpssref{moment_regime}.

\begin{theorem}[Asymptotic FDR and FDP control for pairwise comparisons] \label{thm:main}
Let $\mathcal{H}_0 := \{(i, j): i < j, \ \ \mu_i - \mu_j = 0 \}$ be the set of null hypothesis indices and define $q_0 := |\mathcal{H}_0 |$. Under \assumpsref{balance_homo} to \assumpssref{ref_null} and  the condition
\begin{equation} \label{signalsize}
\text{Card}\left\{ (i, j) : 1 \leq i < j \leq m, \frac{ |\mu_i - \mu_j|}{\sqrt{\sigma^2_i/n_i + \sigma^2_j/ n_j}} \geq 8 c_U^2\sqrt{\log m}\right\} \geq 1,
\end{equation}  the testing procedure in \defref{dBH} has the properties that
\begin{equation} \label{FDRcontrol}
\limsup_{m,n } \frac{dFDR}{\frac{\alpha}{2} \left(1 + \frac{q_0}{q}\right)} \leq 1
\end{equation}
and for any $\epsilon > 0$,
\begin{equation}
\label{FDPcontrol}
P\left(dFDP < \frac{\alpha}{2} \left(1 + \frac{q_0}{q}\right) + \epsilon\right)\longrightarrow 1
\end{equation}
as $m, n \longrightarrow \infty$, where the $dFDP$ and $dFDR$ are defined as in \secref{intro}.
\end{theorem}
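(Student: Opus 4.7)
The plan is to decompose the directional error count at the data-driven BH threshold $\hat t := \hat k \alpha/q$ and exploit the BH identity $R(\hat t) = q\hat t/\alpha$ to cancel the random threshold. Writing
\begin{equation*}
V_{\text{dir}}(t) := \sum_{(i,j)\in\mathcal{H}_0} \mathbf{1}\{p_{|ij|}\leq t\} + \sum_{(i,j)\notin\mathcal{H}_0}\mathbf{1}\{p_{|ij|}\leq t,\,\text{sgn}(T_{ij})\neq \text{sgn}(\mu_i-\mu_j)\},
\end{equation*}
the goal is to show $V_{\text{dir}}(t) \leq (q+q_0)t/2 \cdot (1+o_P(1))$ uniformly over a suitable range of $t$, which immediately gives
\begin{equation*}
dFDP \;=\; \frac{V_{\text{dir}}(\hat t)}{R(\hat t)} \;=\; \frac{\alpha V_{\text{dir}}(\hat t)}{q \hat t} \;\leq\; \frac{\alpha}{2}\Bigl(1 + \frac{q_0}{q}\Bigr)(1+o_P(1)),
\end{equation*}
establishing \eqref{FDPcontrol}; the companion \eqref{FDRcontrol} then follows from bounded convergence since $dFDP \leq 1$ almost surely.

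For the pointwise mean bound, I would combine \assumpref{as3} with a Cram\'er-type moderate deviation for self-normalized sums (justified by the moment condition \assumpref{as1}) to obtain $\mathbb{P}(p_{|ij|}\leq t) = t(1+o(1))$ uniformly for $(i,j)\in\mathcal{H}_0$ and $t$ in the BH-relevant range. For a non-null pair with, say, $\mu_i>\mu_j$, a directional error requires $T_{ij}$ to fall in its ``wrong'' lower tail; the positive mean shift of $T_{ij}$ can only decrease lower-tail mass relative to the null reference, so $\mathbb{P}(p_{|ij|}\leq t,\,T_{ij}<0) \leq (t/2)(1+o(1))$. Summing over all pairs yields $\mathbb{E}[V_{\text{dir}}(t)] \leq (q+q_0)t/2 \cdot (1+o(1))$. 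The signal-size condition \eqref{signalsize} guarantees that the super-strong pair is rejected with probability tending to $1$, so $\hat k \geq 1$, $\hat t \geq \alpha/q$, and $dFDP$ is a well-defined ratio whose denominator is bounded away from zero.

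The main obstacle is promoting the pointwise mean bound to a bound uniform in $t$ on $V_{\text{dir}}(t)$ itself---this is precisely the uniform weak law of large numbers developed in \secref{ULLNpf}. As the authors emphasize, the statistics $\{T_{ij}\}$ fail positive regression dependence and are genuinely correlated whenever two pairs share an index, so classical FDR machinery such as \citet{BY2001} is inapplicable. A second-moment computation that carefully partitions summands by whether the index sets $\{i,j\}$ and $\{k,\ell\}$ are disjoint (essentially independent) or share a single index (dependent through a common sample mean and variance), combined with Cram\'er-type joint-tail approximations and a monotonicity-based discretization over a fine grid of thresholds $t$, is what \secref{ULLNpf} is engineered to deliver.
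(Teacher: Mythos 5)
Your overall architecture mirrors the paper's: reduce \eqref{FDRcontrol} to \eqref{FDPcontrol} via boundedness of $dFDP$, use the BH fixed-point identity $\hat{\alpha}=\frac{\alpha}{q}\sum_{i<j}{\bf 1}(p_{|ij|}\leq\hat{\alpha})$ to cancel the random threshold, bound the directional error count by a uniform law of large numbers proved through a fine discretization in $t$ plus a Chebyshev/second-moment computation that partitions pairs by whether their index sets overlap (bivariate normal approximation for the overlapping case, near-independence otherwise), and control the non-null pairs by the deterministic domination induced by the mean shift (the paper's $\bar{p}_{ij,L}\leq p_{ij,L}$ on $\mathcal{H}_+\cup\mathcal{H}_0$). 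All of that matches Sections 2 and 3.

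There is, however, one genuine gap: your handling of the lower end of the threshold range. You argue that \eqref{signalsize} forces one rejection, hence $\hat{k}\geq 1$ and $\hat{t}\geq\alpha/q$, and treat this as enough to apply the uniform bound at $t=\hat{t}$. But the uniform statement $V_{\text{dir}}(t)\leq\tfrac{1}{2}(q+q_0)t(1+o_P(1))$ cannot hold down to $t=\alpha/q$: at that scale $\mathbb{E}[V_{\text{dir}}(t)]\leq\alpha<1$, so $V_{\text{dir}}(t)$ is either $0$ or at least $1$, and on the non-vanishing event that it is at least $1$ the ratio $V_{\text{dir}}(t)/(\tfrac{1}{2}(q+q_0)t)$ exceeds $1/\alpha>1+\epsilon$. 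The supremum must be restricted to $t\geq 2\bar{\Phi}(\sqrt{2\log m})\asymp m^{-1}(\log m)^{-1/2}$, where the expected count diverges, and one must then separately prove $P(\hat{\alpha}\geq 2\bar{\Phi}(\sqrt{2\log m}))\to 1$. A single guaranteed rejection only yields $\hat{\alpha}\geq\alpha/q\asymp m^{-2}$, which is far too small. The paper closes this by amplifying the one strong pair in \eqref{signalsize} into a set $\mathcal{H}_s$ of $\lfloor m/2\rfloor$ strong pairs via a triangle-inequality/pigeonhole argument (this is exactly where the constant $8c_U^2$ and \assumpref{as2} are used), which forces at least $\lfloor m/2\rfloor$ rejections and hence $\hat{\alpha}\geq\alpha\lfloor m/2\rfloor/q\gtrsim 1/m$. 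Without this amplification step, your chain $dFDP=\alpha V_{\text{dir}}(\hat{t})/(q\hat{t})\leq\frac{\alpha}{2}(1+q_0/q)(1+o_P(1))$ cannot be completed.
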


%

The dFDR bound of the form $\frac{\alpha}{2} (1 + \frac{q_0}{q})$ has appeared in the exact results established by \citet[Corollary 3 and Corollary 6]{BenAndDan2005} for test statistics that are independent or positively dependent. (More precisely, their bound takes the form $\frac{\alpha}{2} (1 + \frac{N_0}{N})$ where $N$ and $N_0$ respectively denote the number of hypotheses and the number of true nulls in a given multiple testing problem.) The term $ \frac{\alpha}{2} (1 + \frac{q_0}{q})$ becomes $\alpha/2$ when no hypotheses are true $(q_0 = 0)$. The latter is  intuitive  because in an extremely error-prone situation with all differences $\mu_i - \mu_j$ being very close to but not exactly zero, every $T_{ij}$ stochastically behaves almost as if $\mu_i - \mu_j = 0$, in which case upon rejecting $H_{ij}$ there is an approximate $1/2$ chance of making false declaration about $\text{sgn}(\mu_i  - \mu_j)$. In numerical studies where none  of the $\mu_i$'s are set to be equal, \citet{williams} showed that the dFDR can be controlled at $\alpha/2$ by the BH procedure in \defref{dBH} at level $\alpha$.

Lastly, \eqref{signalsize} is imposed for  the probabilistic control of the dFDP in \eqref{FDPcontrol}. In particular only one $(i, j)$ pair is required to give a more prominent signal (which will, however, necessarily imply that $O(m)$ many other pairs will also give similarly prominent signals; see \eqref{sigamt}). Ultimately, this is to ensure that with probability tending to 1, our BH procedure in \defref{dBH} will find a p-value cutoff  that isn't too close to zero; see displays \eqref{threshold} and \eqref{thresholdbdd}. The existence of such a non-zero cutoff is actually \emph{assumed} in the main asymptotic theorem of \citep[Theorem 4]{storey2004strong}, which represents the line of approach our current work is based on. In fact,  \citet[Proposition 2.1]{phase} has shown that even in the most ideal multiple testing setting with \emph{independent} and \emph{exact} $p$-values, the BH procedure cannot control the FDP with an overwhelming probability as the number of tested hypotheses increases if \emph{the number of  non-null hypotheses does not grow in tandem}; hence, \eqref{signalsize} is a kind of near necessary condition to guarantee asymptotic dFDP/FDP-type control. In a sense, our result complements a result of \citet[Corollary 3.5]{yekutieli2008false} which says that, under some conditions,  the BH procedure is valid for testing pairwise comparisons when the complete null hypothesis is true, i.e. $\mu_i = \mu_j$ for all pairs $i< j$.

We now give the main part of the proof for \thmref{main}; the weak convergence of a key  empirical process, which underpins this proof, will be treated separately in \secref{ULLNpf}.

\subsection{Proof of \thmref{main}}
It suffices to show \eqref{FDPcontrol}, since it implies that
\[
\limsup_{m, n}   \frac{\mathbb{E}[dFDP]}{\frac{\alpha}{2} (1 + \frac{q_0}{q})} \leq 1 + \epsilon
\]
 for any $\epsilon > 0$, and the arbitrariness of $\epsilon$ will give \eqref{FDRcontrol}. The following notation will be used throughout this section and the next: Let $\mathcal{H}_+ := \{(i, j): i < j, \ \ \mu_i - \mu_j > 0 \}$ and $\mathcal{H}_- := \{(i, j): i < j, \ \ \mu_i - \mu_j < 0 \}$. Define $q_+ := |\mathcal{H}_+| $ and $q_- = |\mathcal{H}_-| $, and hence $q = q_+ + q_- + q_0$. For notational brevity, we will also use $\sum_{\mathcal{H}_+}$ to denote a summation over all pairs $(i, j )$ in the set $\mathcal{H}_+$, and use $\sum_{\mathcal{H}_-}$, $\sum_{\mathcal{H}_0}$, $\sum_{\mathcal{H}_+ \cup \mathcal{H}_0}$  and $\sum_{\mathcal{H}_- \cup \mathcal{H}_0}$ similarly. Finally, for each pair $i \neq j$, we define the centered  two-sample t-statistic
\begin{equation} \label{centerTs}
\bar{T}_{ij} = \frac{(\bar{X}_{i}-\bar{X}_{j}) - (\mu_i - \mu_j)}{\sqrt{\hat{\sigma}^{2}_{i}/n_{i}+\hat{\sigma}^{2}_{j}/n_{j}}}.
\end{equation}

 Note that the rejection rule in \defref{dBH} $(ii)$ is equivalent to the classical $BH$ procedure, so by Theorem $2$ in \citet{BH1995} it is equivalent to a procedure that rejects $H_{ij}$ whenever $|p_{|ij|}| < \hat{\alpha}$, where
\[
\hat{\alpha} = \sup \left\{ 0 \leq \tilde{\alpha} \leq 1 :     \tilde{\alpha}\leq \frac{\alpha}{q} \sum_{i < j }{\bf 1}_{(p_{|ij|} \leq \tilde{\alpha})}\right\}.
\]
Since, with probability one,  $\frac{\alpha}{q} \sum_{i < j }{\bf 1}_{(p_{|ij|} \leq \tilde{\alpha})}$  is right continuous as a function in $\tilde{\alpha}$ on the interval $[0, 1]$, by elementary arguments it can be shown that
\begin{equation} \label{threshold}
\hat{\alpha} = \frac{\alpha}{q} \sum_{i < j }{\bf 1}(p_{|ij|} \leq \hat{\alpha}).
\end{equation}
If, for every $\epsilon > 0$, we can show that, as $m, n \longrightarrow \infty$,
\begin{multline} \label{ULLN}
P \left(\sup_{ 2 \bar{\Phi}(\sqrt{2 \log m})\leq \tilde{\alpha} \leq 1}\frac{\sum_{\mathcal{H}_+}  {\bf 1}_{( p_{ij, L} \leq \tilde{\alpha}/2)}+ \sum_{\mathcal{H}_-} {\bf 1}_{(  p_{ij, U} \leq \tilde{\alpha}/2)} + \sum_{\mathcal{H}_0} {\bf 1}_{(  p_{|ij|} \leq \tilde{\alpha})}}{\tilde{\alpha}(q + q_0)/2} \leq 1 +\epsilon \right) \\
\longrightarrow_p 1
\end{multline}
and
\begin{equation} \label{thresholdbdd}
P( 2 \bar{\Phi}(\sqrt{2 \log m}) \leq \hat{\alpha} \leq 1) \longrightarrow_p 1,
\end{equation}
then \eqref{FDPcontrol}  is proven since, by \eqref{threshold},  the expression
\begin{multline*}
 \frac{2q}{\alpha (q + q_0)} dFDP =\frac{ 2q[\sum_{\mathcal{H}_+}  {\bf 1}_{( p_{ij, L} \leq \hat{\alpha}/2)}+ \sum_{\mathcal{H}_-}  {\bf 1}_{(  p_{ij, U} \leq \hat{\alpha}/2)} + \sum_{\mathcal{H}_0}  {\bf 1}_{(  p_{|ij|} \leq \hat{\alpha})}]}{\alpha (q + q_0)\sum_{i < j }{\bf 1}_{(p_{|ij|} \leq \hat{\alpha})}}
\end{multline*}
is less than $1  + \epsilon$ with probability tending to one. \eqref{ULLN} essentially amounts to proving a uniform law of large number, which is deferred to \secref{ULLNpf}. We will focus on showing the probabilistic bound for the cutoff point $\hat{\alpha}$ in \eqref{thresholdbdd} for the rest of this section.

We claim that, under \assumpref{balance_homo}, there exists a subset $\mathcal{H}_s \subset \{(i, j): 1 \leq i < j \leq m\}$ such that,
\begin{equation} \label{sigamt}
\text{Card}\{\mathcal{H}_s\} =  \lfloor m/2 \rfloor\text{ and }
\frac{|\mu_i - \mu_j|}{\sqrt{ \sigma^2_i /n_i + \sigma^2_j /n_j}} \geq 4 \sqrt{\log m}  \text{ for each }(i, j) \in \mathcal{H}_s.
\end{equation}
By \eqref{signalsize}, without loss of generality we can assume
\begin{equation} \label{WLOG}
 \frac{ |\mu_1 - \mu_2|}{\sqrt{\sigma^2_1/n_1 + \sigma^2_2/ n_2}} \geq 8 c_U^2\sqrt{\log m}.
\end{equation}
There can only be two cases: If
$
\text{Card}\{ j : 3 \leq j \leq m , \frac{| \mu_1 - \mu_j|}{\sqrt{ \sigma^2_1 /n_1 + \sigma^2_j /n_j}}  \geq 4 \sqrt{\log m} \}\geq m/2,
$
then \eqref{sigamt} is satisfied; otherwise, we have
\begin{equation} \label{ow}
\text{Card}\left\{j : 3 \leq j \leq m , \frac{| \mu_1 - \mu_j|}{\sqrt{ \sigma^2_1 /n_1 + \sigma^2_j /n_j}}  < 4 \sqrt{\log m}\right\} \geq m/2.
\end{equation}
Since $|\mu_2 - \mu_j| \geq |\mu_2 - \mu_1| - |\mu_1 - \mu_j|$, by \assumpref{balance_homo},
\[
\frac{|\mu_2 - \mu_j|}{\sqrt{ \sigma^2_2/n_2+ \sigma^2_j/n_j}} \geq \frac{|\mu_2 - \mu_1|}{c_U\sqrt{\sigma^2_2/n_2+ \sigma^2_1/n_1   }} - \frac{c_U|\mu_j - \mu_1|}{\sqrt{\sigma^2_j/n_j+ \sigma^2_1/n_1   }}.
\]
Recalling that $ c_U \geq 1$, together with \eqref{WLOG} and \eqref{ow} we have
\[
\text{Card}\left\{ j : 3 \leq j \leq m ,  \frac{| \mu_2 - \mu_j|}{\sqrt{ \sigma^2_2 /n_2 + \sigma^2_j /n_j}}  \geq 4 \sqrt{\log m} \right\} \geq m/2
\]
and hence \eqref{sigamt} also holds.  The maximal inequality in the appendix (\lemref{max_ineq_variances}) also states that
\begin{equation*} \label{eq1}
P\left(\max_{1 \leq i \leq m}\left|\frac{\hat{\sigma}_i^2}{\sigma_i^2}  - 1 \right| > (\log m)^{-2}\right) = O(n^{-r - c}).
\end{equation*}
for some $c > 0$, which, together with  \eqref{sigamt},  gives
\begin{equation} \label{eq3}
P\left( \min_{(i, j) \in \mathcal{H}_s} \frac{|\mu_i - \mu_j|}{\sqrt{ \hat{\sigma}_i^2/n_i  +  \hat{\sigma}_j^2/n_j }} \geq 3.9 \sqrt{\log m}\right)  \longrightarrow 1.
\end{equation}

On the other hand, by the Cram\'er-type moderate deviation for two-sample t-statistics (\lemref{changzhou}), under \assumpsref{balance_homo} and \assumpssref{moment_regime} we have
\begin{equation} \label{2sampCramer}
\frac{P(\bar{T}_{ij}\geq s)}{\bar{\Phi}(s)} = 1+ O \Biggl[\left(\frac{1 + s}{n^{1/2 - 1/(4r + 2 + \nu)}}\right)^{4r + 2 + \nu}\Biggr]
\end{equation}
uniformly in $s \in [0, o(n^{1/2 - 1/(4r + 2 + \nu)}))$ and $1 \leq i < j \leq m$. A union bound then implies
\begin{multline} \label{eq4}
P\left(\max_{ (i, j)\in \mathcal{H}_s}\left| \bar{T}_{ij}\right| \geq \sqrt{2 \log m}\right) \leq  \lfloor m/2 \rfloor (2 + o(1)) \bar{\Phi}
(\sqrt{2 \log m}) \\
= O( (\log m)^{-1/2}) \longrightarrow 0
\end{multline}
using the fact that $\bar{\Phi}(x) \leq (2 \pi)^{-1/2}x^{-1} \exp(- x^2/2)$ for all $x > 0$.

In view of \eqref{eq3} and \eqref{eq4}, as well as $3.9 - \sqrt{2} > 2$, we have that
\[
P\left(\sum_{1 \leq i < j \leq m} {\bf 1}_{( |T_{ij}| \geq 2 \sqrt{\log m})} \geq \lfloor m/2 \rfloor \right) \longrightarrow 1.
\]
By \assumpref{ref_null}, $|T_{ij}| \geq 2 \sqrt{\log m}$ implies $p_{|ij|} = o(m^{-2})$, so recognizing $q = O(m^2)$ we have
\[
P\left( \sum_{1 \leq i < j \leq m}  {\bf 1}_{(p_{|ij|} \leq 1 /q )} \geq  \lfloor m/2 \rfloor\right) \longrightarrow 1,
\]
which in turn gives
\begin{equation} \label{critical_mass}
P\left(2 \bar{\Phi}(\sqrt{2 \log m}) \leq \frac{\alpha \sum_{1 \leq i < j \leq m}  {\bf 1}_{(p_{|ij|} \leq 2 \bar{\Phi}(\sqrt{2 \log m}))}}{q}\right) \longrightarrow 1
\end{equation}
since $2 \bar{\Phi}(\sqrt{2 \log m}) = O( (\log m)^{-1/2}m^{-1})$. By the definition of $\hat{\alpha}$ this implies \eqref{thresholdbdd}.

\ \

\ \

\ \

\section{Proof of the uniform law of large numbers} \label{sec:ULLNpf}

In this section, we will establish the following weak convergence of a key empirical process, which underpins our proof in the prior section:

\begin{lemma}[Uniform weak law of large numbers] \label{lem:keyWLLN}
Suppose \assumpsref{balance_homo} to \assumpssref{ref_null}  hold.  Then
\begin{equation}\label{ratio_cen}
\sup_{0 \leq t \leq \sqrt{2 \log m}}\left|\frac{\sum_{(i, j) \in \mathcal{H}_+ \cup \mathcal{H}_0}  {\bf 1}_{( \bar{\Phi}^{-1}(\bar{p}_{ij, L}) \geq t)}+ \sum_{ (i, j) \in \mathcal{H}_- \cup \mathcal{H}_0} {\bf 1}_{( \bar{\Phi}^{-1}(\bar{p}_{ij, U}) \geq t)} }{\bar{\Phi}(t)(q + q_0)} - 1 \right|\longrightarrow_p 0,
\end{equation}
where $\bar{p}_{ij, L} := F_{ij}(\bar{T}_{ij})$ and $\bar{p}_{ij, U} := 1-  F_{ij}(\bar{T}_{ij})$ are respectively one-sided lower and upper tailed p-values computed from the centered t-statistics in \eqref{centerTs}.
\end{lemma}

Note that this lemma automatically leads to  \eqref{ULLN}.
This can be easily seen to be true by observing that for each $(i, j)$, the events
\[
\{p_{ij, L} \leq \tilde{\alpha}/2\}, \quad \{ p_{ij, U} \leq \tilde{\alpha}/2\} \text{ and } \{p_{|ij|} \leq \tilde{\alpha}\}
\]
are identical to
\[
\{\bar{\Phi}^{-1}(p_{ij, L} )\geq \bar{\Phi}^{-1}(\tilde{\alpha}/2)\}, \{\bar{\Phi}^{-1}(p_{ij, U} )\geq \bar{\Phi}^{-1}(\tilde{\alpha}/2)\} \text{ and } \{\bar{\Phi}^{-1}(p_{|ij|} )\geq \bar{\Phi}^{-1}(\tilde{\alpha})\}
\]
respectively,
and   that $\bar{p}_{ij, L} \leq {p}_{ij, L}$ for $(i, j)\in \mathcal{H}_+ \cup \mathcal{H}_0$, $\bar{p}_{ij, U} \leq {p}_{ij, U}$ for $(i, j)\in \mathcal{H}_-\cup \mathcal{H}_0$.

Our proof of  \lemref{keyWLLN} essentially shows that for every threshold $t$, the  empirical process in question can be bounded by terms that converge to zero using Chebyshev's inequality. This strategy has also been adopted previously by  \citet{phase} to prove the FDP controlling property of the BH procedure under weak dependence among one-sample t-statistics. In  the current context of  pairwise comparisons with two-sample t-statistics, the key observation making this strategy possible is that, on rewriting the difference bounded  in  display \eqref{ratio_cen} as
\[
\frac{\sum_{(i, j) \in \mathcal{H}_+ \cup \mathcal{H}_0}  \left({\bf 1}_{( \bar{\Phi}^{-1}(\bar{p}_{ij, L}) \geq t)} - \bar{\Phi}(t)\right)+ \sum_{ (i, j) \in \mathcal{H}_- \cup \mathcal{H}_0}\left( {\bf 1}_{( \bar{\Phi}^{-1}(\bar{p}_{ij, U}) \geq t)}- \bar{\Phi}(t)\right)  }{\bar{\Phi}(t)(q + q_0)},
\]
upon expansion, the highest-order terms of its second moment are seen to involve pairs of index duples  that do not overlap. For example, one such term is
\[
\frac{1}{(q + q_0)^2}\sum_{\substack{(i, j) \in \mathcal{H}_0\cup \mathcal{H}_+\\(i', j')\in \mathcal{H}_0\cup \mathcal{H}_- \\ |\{i, j\} \cap \{i', j'\}|= 0} } 
\left( \frac{P(\bar{\Phi}^{-1}(\bar{p}_{ij, L}) \geq t)}{\bar{\Phi}(t)} - 1\right) \left( \frac{P(\bar{\Phi}^{-1}(\bar{p}_{i'j', U}) \geq t)}{\bar{\Phi}(t)}  - 1\right).
\]
Here, each summand takes the product form $\left( \frac{P(\bar{\Phi}^{-1}(\bar{p}_{ij, L}) \geq t)}{\bar{\Phi}(t)} - 1\right) \left( \frac{P(\bar{\Phi}^{-1}(\bar{p}_{i'j', U}) \geq t)}{\bar{\Phi}(t)}  - 1\right)$ due to the independence between $\bar{p}_{ij, L}$ and $\bar{p}_{i'j', U}$ for $|\{i, j\} \cap \{i', j'\}| = 0$.
Since the number of summands, which equals ${m \choose 2}{m - 2 \choose 2}$,  and the divisor $(q + q_0)^2$ are both of order $O(m^4)$, the whole term should approach zero, since  $\frac{P(\bar{\Phi}^{-1}(\bar{p}_{ij, L}) \geq t)}{\bar{\Phi}(t)} - 1$ can be expected to vanish in a uniform manner. The ensuing proof  formalizes this.

\subsection{Proof of \lemref{keyWLLN}}
Let $0 = t_0 <  t_1 < ... < t_g = \sqrt{2 \log m} $ be such that 
\[
t_l - t_{l-1} = v_m  \text{ for } 1  \leq l \leq g-1 \qquad \text{ and } \qquad t_g - t_{g-1} \leq v_m,
\] where 
\[
v_m = (\sqrt{2  \log m} \log_4 m)^{-1} = (t_g \log_4 m)^{-1}.
\]
By the mean value theorem, for some $c_l \in (t_{l-1}, t_l)$,
\begin{equation} 
\frac{\bar{\Phi}(t_{l})}{\bar{\Phi}(t_{l-1})} = \frac{\bar{\Phi}(t_{l-1}) - v_m \phi(c_l)}{\bar{\Phi}(t_{l-1})} =1 - \frac{v_m \phi(c_l)}{
\bar{\Phi}(t_{l-1})}  = 1 + o(1).
\end{equation}
Since 
\[
0 < \frac{v_m \phi(c_l)}{\bar{\Phi}(t_{l-1})} \leq \frac{v_m \phi(t_{l-1})}{\bar{\Phi}(t_{l-1})} = (\log_4 m)^{-1} \frac{\phi(t_{l-1})}{t_g\bar{\Phi}(t_{l-1}) }
\]
and  that, whenever, $m \geq 2$
\[
\left|\frac{\phi(t)}{t_g\bar{\Phi}(t)}\right|\leq C \text{ for some universal constant } C \text{ and all  }  t \in [0, t_g]
\]
(the latter fact comes from the  well-known fact that $\frac{\phi(t)}{t \bar{\Phi}(t)} \sim 1$ as $t \longrightarrow \infty$; see \citet[p.113]{victor2009self} for instance), we have that
\begin{equation} \label{unifo1}
\max_{1 \leq l \leq g}\left|\frac{\bar{\Phi}(t_l)}{\bar{\Phi}(t_{l-1})} - 1 \right| \rightarrow 0 \text{ as } m, n \longrightarrow \infty.
\end{equation}
%
%
For any $t \in [t_{l-1} , t_{l}]$, $l = 1, \dots, g$, note that
\begin{multline} \label{tele2}
\frac{{\bf 1}_{(\bar{\Phi}^{-1} (\bar{p}_{ij, L})\geq t_l)}}{\bar{\Phi}(t_{l-1})} \leq \frac{{\bf 1}_{(\bar{\Phi}^{-1} (\bar{p}_{ij, L})\geq t)}}{\bar{\Phi}(t)} \leq \frac{{\bf 1}_{(\bar{\Phi}^{-1} (\bar{p}_{ij, L})\geq t_{l-1})}}{\bar{\Phi}(t_{l})} \;\ \text{ and }\\
\frac{{\bf 1}_{(\bar{\Phi}^{-1} (\bar{p}_{ij, U})\geq t_l)}}{\bar{\Phi}(t_{l-1})} \leq \frac{{\bf 1}_{(\bar{\Phi}^{-1} (\bar{p}_{ij, U})\geq t)}}{\bar{\Phi}(t)} \leq \frac{{\bf 1}_{(\bar{\Phi}^{-1} (\bar{p}_{ij, U})\geq t_{l-1})}}{\bar{\Phi}(t_{l})}.
\end{multline}
Combining   \eqref{unifo1} and \eqref{tele2}  implies that, for proving  \lemref{keyWLLN}, it suffices to show
\begin{equation} \label{finalprobconv}
\max_{0 \leq l \leq g}\left| \frac{\sum_{ (i, j) \in  \mathcal{H}_+ \cup \mathcal{H}_0}  {\bf 1}_{( \bar{\Phi}^{-1}(\bar{p}_{ij, L}) \geq t_l)}+ \sum_{ (i, j) \in  \mathcal{H}_- \cup \mathcal{H}_0}{\bf 1}_{( \bar{\Phi}^{-1}(\bar{p}_{ij, U}) \geq  t_l)} }{\bar{\Phi}( t_l)(q + q_0)}  - 1\right| \longrightarrow_p 0;
\end{equation}
to this end, for a given $\epsilon >0$, we will bound the probability 
\[
P\left(\max_{0 \leq l \leq g}\left| \frac{\sum_{\mathcal{H}_+ \cup \mathcal{H}_0}  {\bf 1}_{( \bar{\Phi}^{-1}(\bar{p}_{ij, L}) \geq t_l)}+ \sum_{\mathcal{H}_- \cup \mathcal{H}_0}{\bf 1}_{( \bar{\Phi}^{-1}(\bar{p}_{ij, U}) \geq  t_l)} }{\bar{\Phi}( t_l)(q + q_0)}  - 1\right|  > \epsilon \right)
\]
for the rest of this section.

First, note that
\begin{multline}\label{break_into_great_and_less}
P\left(\max_{0 \leq l \leq g}\left| \frac{\sum_{\mathcal{H}_+ \cup \mathcal{H}_0}  {\bf 1}_{( \bar{\Phi}^{-1}(\bar{p}_{ij, L}) \geq t_l)}+ \sum_{\mathcal{H}_- \cup \mathcal{H}_0} {\bf 1}_{( \bar{\Phi}^{-1}(\bar{p}_{ij, U}) \geq  t_l)} }{\bar{\Phi}( t_l)(q + q_0)}  - 1\right| > \epsilon \right)
\\
\leq \sum_{0 \leq l \leq g}P\left( \frac{\sum_{\mathcal{H}_+ \cup \mathcal{H}_0}  {\bf 1}_{( \bar{\Phi}^{-1}(\bar{p}_{ij, L}) \geq t_l)}+ \sum_{\mathcal{H}_- \cup \mathcal{H}_0} {\bf 1}_{( \bar{\Phi}^{-1}(\bar{p}_{ij, U}) \geq  t_l)} }{\bar{\Phi}( t_l)(q + q_0)}  > 1 + \epsilon \right)  +
 \\
 \sum_{0 \leq l\leq g}P\left( \frac{\sum_{\mathcal{H}_+ \cup \mathcal{H}_0}  {\bf 1}_{( \bar{\Phi}^{-1}(\bar{p}_{ij, L}) \geq t_l)}+ \sum_{\mathcal{H}_- \cup \mathcal{H}_0} {\bf 1}_{( \bar{\Phi}^{-1}(\bar{p}_{ij, U}) \geq  t_l)} }{\bar{\Phi}( t_l)(q + q_0)}  <1 - \epsilon \right).
\end{multline}
For any $l = 1, \dots, g$, from \eqref{tele2} we know that
\begin{multline*}
\frac{{\bf 1}_{(\bar{\Phi}(\bar{p}_{ij, L}) \geq t_l)}}{\bar{\Phi}(t_l)} \leq \frac{{\bf 1}_{(\bar{\Phi}(\bar{p}_{ij, L}) \geq t)}}{\bar{\Phi}(t)} \frac{\bar{\Phi}(t_{l-1})}{\bar{\Phi} (t_l)} \text{ and }\\
 \frac{{\bf 1}_{(\bar{\Phi}(\bar{p}_{ij, U}) \geq t_l)}}{\bar{\Phi}(t_l)} \leq \frac{{\bf 1}_{(\bar{\Phi}(\bar{p}_{ij, U}) \geq t)}}{\bar{\Phi}(t)} \frac{\bar{\Phi}(t_{l-1})}{\bar{\Phi} (t_l)}  \text{, for any } t \in [t_{l-1}, t_l].
\end{multline*}
From this and  \eqref{unifo1},  we can deduce that, for a large enough $n$ (and $m$) and all $l = 1, \dots g$, 
\begin{multline*}
\frac{{\bf 1}_{(\bar{\Phi}(\bar{p}_{ij, L}) \geq t_l)}}{\bar{\Phi}(t_l)} \leq \frac{{\bf 1}_{(\bar{\Phi}(\bar{p}_{ij, L}) \geq t)}}{\bar{\Phi}(t)} \left(1 + \frac{\epsilon}{2 + \epsilon}\right) \text{ and }\\
 \frac{{\bf 1}_{(\bar{\Phi}(\bar{p}_{ij, U}) \geq t_l)}}{\bar{\Phi}(t_l)} \leq \frac{{\bf 1}_{(\bar{\Phi}(\bar{p}_{ij, U}) \geq t)}}{\bar{\Phi}(t)}\left(1 + \frac{\epsilon}{2 + \epsilon}\right) \text{ for any } t \in [t_{l-1}, t_l],
\end{multline*}
which implies that 
\begin{multline}\label{1tog}
\sum_{1 \leq l \leq g}P\left( \frac{\sum_{\mathcal{H}_+ \cup \mathcal{H}_0}  {\bf 1}_{( \bar{\Phi}^{-1}(\bar{p}_{ij, L}) \geq t_l)}+ \sum_{\mathcal{H}_- \cup \mathcal{H}_0} {\bf 1}_{( \bar{\Phi}^{-1}(\bar{p}_{ij, U}) \geq  t_l)} }{\bar{\Phi}( t_l)(q + q_0)}  > 1 + \epsilon \right)  \\
\leq  v_m^{-1}\int_0^{t_g}P\left( \frac{\sum_{\mathcal{H}_+ \cup \mathcal{H}_0}  {\bf 1}_{( \bar{\Phi}^{-1}(\bar{p}_{ij, L}) \geq t)}+ \sum_{\mathcal{H}_- \cup \mathcal{H}_0} {\bf 1}_{( \bar{\Phi}^{-1}(\bar{p}_{ij, U}) \geq  t)} }{\bar{\Phi}( t)(q + q_0)}  > 1 + \epsilon/2 \right) dt,\\
\text{ for large enough } n \;(\text{and } m).
\end{multline}
With a completely analogous argument, one can then deduce that
%
%
\begin{multline} \label{0tog-1}
\sum_{0 \leq l\leq g-1}P\left( \frac{\sum_{\mathcal{H}_+ \cup \mathcal{H}_0}  {\bf 1}_{( \bar{\Phi}^{-1}(\bar{p}_{ij, L}) \geq t_l)}+ \sum_{\mathcal{H}_- \cup \mathcal{H}_0} {\bf 1}_{( \bar{\Phi}^{-1}(\bar{p}_{ij, U}) \geq  t_l)} }{\bar{\Phi}( t_l)(q + q_0)}  < 1 - \epsilon \right)  \\
\leq  v_m^{-1}\int_0^{t_g}P\left( \frac{\sum_{\mathcal{H}_+ \cup \mathcal{H}_0}  {\bf 1}_{( \bar{\Phi}^{-1}(\bar{p}_{ij, L}) \geq t)}+ \sum_{\mathcal{H}_- \cup \mathcal{H}_0} {\bf 1}_{( \bar{\Phi}^{-1}(\bar{p}_{ij, U}) \geq  t)} }{\bar{\Phi}( t)(q + q_0)}  < 1 - \epsilon/2 \right) dt,\\
\text{ for large enough } n \;(\text{and } m).
\end{multline}

Combining \eqref{break_into_great_and_less}, \eqref{1tog} and \eqref{0tog-1}, 
\begin{multline} \label{intandsum}
P\left(\max_{0 \leq l \leq g}\left| \frac{\sum_{\mathcal{H}_+ \cup \mathcal{H}_0}  {\bf 1}_{( \bar{\Phi}^{-1}(\bar{p}_{ij, L}) \geq t_l)}+ \sum_{\mathcal{H}_- \cup \mathcal{H}_0}{\bf 1}_{( \bar{\Phi}^{-1}(\bar{p}_{ij, U}) \geq  t_l)} }{\bar{\Phi}( t_l)(q + q_0)}  - 1\right|  > \epsilon \right)  \leq \\
v_m^{-1}\int_0^{t_g}P\left( \left| \frac{\sum_{\mathcal{H}_+ \cup \mathcal{H}_0}  {\bf 1}_{( \bar{\Phi}^{-1}(\bar{p}_{ij, L}) \geq t)}+ \sum_{\mathcal{H}_- \cup \mathcal{H}_0}{\bf 1}_{( \bar{\Phi}^{-1}(\bar{p}_{ij, U}) \geq  t)} }{\bar{\Phi}( t)(q + q_0)}  - 1\right|  >  \epsilon/2 \right)  dt+ \\
\sum_{l = 0, g}P\left(\left| \frac{\sum_{\mathcal{H}_+ \cup \mathcal{H}_0}  {\bf 1}_{( \bar{\Phi}^{-1}(\bar{p}_{ij, L}) \geq t_l)}+ \sum_{\mathcal{H}_- \cup \mathcal{H}_0}{\bf 1}_{( \bar{\Phi}^{-1}(\bar{p}_{ij, U}) \geq  t_l)} }{\bar{\Phi}( t_l)(q + q_0)}  - 1\right|  > \epsilon \right),\\
\text{ for large enough } n \; ( \text{and } m).
\end{multline}
In fact for any  $t \in [0 , \sqrt{2 \log m}]$, for large enough $n$ the probabilities of the type in \eqref{intandsum} above satisfy
\begin{multline} \label{switchbk}
 P\left(\left| \frac{\sum_{\mathcal{H}_+ \cup \mathcal{H}_0}  {\bf 1}_{( \bar{\Phi}^{-1}(\bar{p}_{ij, L}) \geq t)}+ \sum_{\mathcal{H}_- \cup \mathcal{H}_0}{\bf 1}_{( \bar{\Phi}^{-1}(\bar{p}_{ij, U}) \geq  t)} }{\bar{\Phi}( t)(q + q_0)}  - 1\right| > \epsilon \right) =  \\
 P\left(\left|  \sum_{\mathcal{H}_+ \cup \mathcal{H}_0 } \Bigl({\bf 1}_{( - \bar{T}_{ij} \geq t_{ij})}- \barPhi(t) \Bigr)+ \sum_{\mathcal{H}_- \cup \mathcal{H}_0}\Bigl( {\bf 1}_{(  \bar{T}_{ij} \geq t_{ij})} - \barPhi(t) \Bigr)  \right| > \barPhi( t)(q + q_0) \epsilon \right)
\end{multline}
where
\[t_{ij} =  \bar{\Phi}^{-1}( \bar{\Phi}(t)(1 + \epsilon_{|\bar{T}_{ij}|})),
\]
for a number $ \epsilon_{|\bar{T}_{ij}|}$ that  depends on the absolute value $|\bar{T}_{ij}|$ but has the property that
\begin{equation}\label{eptij}
\Bigl|\epsilon_{|\bar{T}_{ij}|} \Bigr|\leq \epsilon_n \text{ for all } (i, j) \quad a.s., \quad \text{ for some deterministic sequence } \epsilon_n = O(n^{-c}).
\end{equation}
The argument leading to \eqref{switchbk} is a bit delicate and is deferred to \appref{delicate}. Continuing from \eqref{switchbk}, with Chebyshev's inequality, 
 we get that for $t \in [0 , \sqrt{2 \log m}]$ and suffciently large $n$,
 \begin{multline}
 P\left(\left| \frac{\sum_{\mathcal{H}_+ \cup \mathcal{H}_0}  {\bf 1}_{( \bar{\Phi}^{-1}(\bar{p}_{ij, L}) \geq t)}+ \sum_{\mathcal{H}_- \cup \mathcal{H}_0}{\bf 1}_{( \bar{\Phi}^{-1}(\bar{p}_{ij, U}) \geq  t)} }{\bar{\Phi}( t)(q + q_0)}  - 1\right| > \epsilon \right) \leq \Bigl(\epsilon \bar{\Phi}(t)(q + q_0)\Bigr)^{-2} \\ 
 \times \Biggl\{  
 \sum_{\substack{(i, j) \in \mathcal{H}_0\cup \mathcal{H}_+\\(i', j')\in \mathcal{H}_0\cup \mathcal{H}_+ \\ |\{i, j\} \cap \{i', j'\}|= \ell\\ \ell = 0, 1, 2} } 
 \Bigl\{P(- \bar{T}_{ij} \geq t_{ij}, - \bar{T}_{i'j'} \geq t_{i'j'}) + \barPhi(t)^2 - \barPhi(t) [P(- \bar{T}_{ij} \geq t_{ij}) + P( - \bar{T}_{i'j'} \geq t_{i'j'})] \Bigr\}\\
 2 \sum_{\substack{(i, j) \in \mathcal{H}_0\cup \mathcal{H}_+\\(i', j')\in \mathcal{H}_0\cup \mathcal{H}_- \\ |\{i, j\} \cap \{i', j'\}|= \ell\\ \ell = 0, 1, 2} } 
 \Bigl\{P(- \bar{T}_{ij} \geq t_{ij},  \bar{T}_{i'j'} \geq t_{i'j'}) + \barPhi(t)^2 - \barPhi(t) [P(- \bar{T}_{ij} \geq t_{ij}) + P(  \bar{T}_{i'j'} \geq t_{i'j'})] \Bigr\}\\
  \sum_{\substack{(i, j) \in \mathcal{H}_0\cup \mathcal{H}_-\\(i', j')\in \mathcal{H}_0\cup \mathcal{H}_- \\ |\{i, j\} \cap \{i', j'\}|= \ell\\ \ell = 0, 1, 2} } 
 \Bigl\{P( \bar{T}_{ij} \geq t_{ij},  \bar{T}_{i'j'} \geq t_{i'j'}) + \barPhi(t)^2 - \barPhi(t) [P( \bar{T}_{ij} \geq t_{ij}) + P(  \bar{T}_{i'j'} \geq t_{i'j'})] \Bigr\}
 \Biggr\}. 
    \label{bigexpansion} 
 \end{multline}
Note that, for each of the three sums appearing on the right hand side, the combinations over pairs of index duples $\{i, j\}$ and $\{i', j'\}$ overlapping for 0, 1 and 2 elements respectively give rise to $O(m^4)$, $O(m^3)$ and $O(m^2)$ many summands. To finish the proof we need the following lemma whose proof is given in \appref{pfBerman}, where we define
\[
T_{ij}^* = \frac{ (\bar{X}_i - \mu_i) -( \bar{X}_j - \mu_j)}{\sqrt{\sigma^2_i/n_i + \sigma^2_j/n_j}}.
\]
\begin{lemma} \label{lem:BermanBdd}
Under our \assumpsref{balance_homo} to \assumpssref{ref_null}, we have, for large enough $n$,
\begin{enumerate}
\item \[
\frac{P(\bar{T}_{ij}\geq t_{ij})}{\bar{\Phi}(t)} =1 +O(n^{-c})
\]
uniformly both in  $ 0 \leq t \leq \sqrt{2 \log m}$ and in  all $i \neq j$, and
\item for some constant $\delta \in (0, 1)$,
\begin{align*}
&P\Bigl(T^*_{i_1j_1} \geq (1 - (\log m)^{-2})t_{i_1j_1}, T^*_{i_2 j_2} \geq (1 - (\log m)^{-2})t_{i_2 j_2} \Bigr) \\
&\leq C (1+ t)^{-2}\exp (-t^2 / (1+ \delta)) + O(n^{-r - c}),
\end{align*}
uniformly both in $0 \leq t \leq   \sqrt{2\log m}$ and in  all pairs of duples $(i_1, j_1), (i_2, j_2)$ such that $|\{i_1, j_1\} \cap \{i_2, j_2\}|  = 1$, $i_1 \neq j_1$ and $i_2 \neq j_2$.

\end{enumerate}
\end{lemma}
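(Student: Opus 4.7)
The lemma has two separate assertions, for which I plan slightly different approaches.

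\textbf{Part (i).} My strategy is to couple the univariate Cram\'er moderate deviation \eqref{2sampCramer} with the uniform a.s.\ bound $|\epsilon(\bar{T}_{ij})| \leq \epsilon_n = O(n^{-c})$ from \eqref{eptij}. Since $\bar{\Phi}^{-1}$ is strictly decreasing, $t_{ij}$ is a.s.\ sandwiched by the two \emph{deterministic} numbers $\bar{\Phi}^{-1}\bigl(\bar{\Phi}(t)(1 \pm \epsilon_n)\bigr)$, so $\{\bar{T}_{ij} \geq t_{ij}\}$ is sandwiched by the two corresponding deterministic-threshold events. Both bracketing thresholds remain in the admissible range $[0, o(n^{1/2-1/(4r+2+\epsilon)}))$ for $t \leq \sqrt{2\log m}$ by \assumpref{as1}, so \eqref{2sampCramer} can be applied to each, and combined with $\bar{\Phi}\bigl(\bar{\Phi}^{-1}(\bar{\Phi}(t)(1\pm \epsilon_n))\bigr) = \bar{\Phi}(t)(1 \pm \epsilon_n)$ this yields the claimed ratio asymptotic with combined error $O(\epsilon_n) + O(n^{-c'})$.

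\textbf{Part (ii).} The crucial structural observation is that $T^*_{ij}$ is a purely standardized (not self-normalized) sum of independent centered random variables, so the pair $(T^*_{i_1j_1}, T^*_{i_2j_2})$ is an affine function of sample means over at most three distinct groups whenever $|\{i_1,j_1\}\cap\{i_2,j_2\}|=1$. First I would compute the correlation $\rho$ case by case: in each of the three sub-configurations of index overlap, writing $a = \sigma^2_{\mathrm{shared}}/n_{\mathrm{shared}}$, the correlation takes the form $\pm a/\sqrt{(a+b_1)(a+b_2)}$ for positive $b_1,b_2$. \assumpref{as2} forces every ratio $b_k/a$ to lie in $[c_U^{-2}, c_U^2]$, whence $|\rho| \leq c_U^2/(1+c_U^2) =: \delta \in [\tfrac{1}{2},1)$, independent of the pair.

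Next I would invoke a \emph{bivariate} Cram\'er-type moderate deviation to compare $P(T^*_{i_1j_1} \geq s_1, T^*_{i_2j_2} \geq s_2)$ with the analogous probability for a standard bivariate normal with correlation $\rho$, uniformly in $s_k = O(\sqrt{\log m})$, up to a multiplicative error $1 + o(1)$ and additive error $O(n^{-c})$. Combined with the classical diagonal tail estimate $P(Z_1\geq s, Z_2\geq s)\leq C(1+s)^{-2}\exp(-s^2/(1+|\rho|))$ and an absorption of the harmless perturbations arising from the $(1-(\log m)^{-2})$ factor and the sandwich from part (i) (which shows $t_{ij} = t + O(\epsilon_n/t)$ a.s., so $(1-(\log m)^{-2})t_{ij}$ differs from $t$ only by $O(t/(\log m)^2)$, introducing a $1+o(1)$ factor in the exponent), this delivers the stated bound.

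The main obstacle is the bivariate moderate-deviation step. The univariate analog for self-normalized sums is citable from Jing et al., but I am unaware of a directly citable bivariate version for independent, non-identically distributed sums under only the $(4r+2+\epsilon)$-th moment in \assumpref{as1}. A self-contained derivation via two-dimensional exponential tilting along $(1,1)$, together with an Edgeworth expansion whose remainder is uniform across the admissible range of correlations bounded by $\delta$ and thresholds up to $\sqrt{2\log m}$, appears to be the technically delicate piece of the argument.
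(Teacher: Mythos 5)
Your part (i) follows essentially the same route as the paper: sandwich the random threshold $t_{ij}$ between the deterministic numbers $t_L=\bar{\Phi}^{-1}(\bar{\Phi}(t)(1+\epsilon_n))$ and $t_U=\bar{\Phi}^{-1}(\bar{\Phi}(t)(1-\epsilon_n))$ and apply \eqref{2sampCramer} to each. One slip: your claim that both bracketing thresholds lie in the admissible range $[0,o(n^{1/2-1/(4r+2+\epsilon)}))$ fails for $t$ near $0$, where $\bar{\Phi}(t)(1+\epsilon_n)>1/2$ forces $t_L<0$ and \eqref{2sampCramer} is not directly applicable; the paper patches this with a separation argument on $t\in[0,1)$, where $\bar{\Phi}(t)$ is bounded away from zero and one compares $P(\bar{T}_{ij}\geq t_L)$ with $P(\bar{T}_{ij}\geq t)$ using the bounded derivative of $\bar{\Phi}^{-1}$ near $\bar{\Phi}(0)$. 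This is a repairable omission.

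Part (ii) contains the genuine gap. Your bound rests entirely on a bivariate Cram\'er-type moderate deviation with multiplicative accuracy, which you concede you cannot cite, and the derivation you sketch---two-dimensional exponential tilting plus an Edgeworth expansion---is not available under \assumpref{as1}: tilting requires finite exponential moments, whereas only a finite $(4r+2+\epsilon)$-th moment is assumed. The paper avoids this entirely by exploiting the fact that an \emph{additive} error $O(n^{-c})$ suffices (it is built into the statement of the lemma), so no ratio-type bivariate expansion is needed. Concretely: write $(T^*_{i_1j_1},T^*_{i_2j_2})$ as a sum of independent mean-zero random vectors ${\boldsymbol \eta}_k$ over at most three groups, truncate at $\|{\boldsymbol \eta}_k\|\leq(\log m)^{-4}$, bound the probability that the truncated-away part matters by $O(n^{-c})$ via a union bound and the moment assumption, then apply Zaitsev's Gaussian approximation theorem for sums of bounded independent vectors, which replaces the truncated sum by a bivariate Gaussian $(W_1,W_2)$ at the cost of an additive error $c_1\exp(-(\log m)^2/c_2)$---negligible against $\exp(-t^2/(1+\delta))$ for $t\leq\sqrt{2\log m}$---and finish with Berman's inequality $P(W_1>s,W_2>s)\leq C(1+s)^{-2}\exp(-s^2/(1+\rho))$. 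Your correlation computation (with $|\rho|$ bounded away from $1$ by \assumpref{as2}) is correct and matches the paper's \eqref{offdiag}; it is the distributional comparison step that must be replaced. Unless you can actually supply a bivariate moderate deviation theorem under polynomial moments, your proof of (ii) does not go through as written.
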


 By applying the lemma to the right hand side of \eqref{bigexpansion}, we continue to get that  for $t \in [0 , \sqrt{2 \log m}]$ and suffciently large $n$,
 \begin{align}
 & P\left(\left| \frac{\sum_{\mathcal{H}_+ \cup \mathcal{H}_0}  {\bf 1}_{( \bar{\Phi}^{-1}(\bar{p}_{ij, L}) \geq t)}+ \sum_{\mathcal{H}_- \cup \mathcal{H}_0}{\bf 1}_{( \bar{\Phi}^{-1}(\bar{p}_{ij, U}) \geq  t)} }{\bar{\Phi}( t)(q + q_0)}  - 1\right| > \epsilon \right)\notag\\
  & \stackrel{(a)}{\lesssim } \Biggl\{   \sum_{\substack{(i, j) \in \mathcal{H}_0\cup \mathcal{H}_+ \\(i', j')\in \mathcal{H}_0\cup \mathcal{H}_- \\ |\{i, j\} \cap \{i', j'\}|= 1} } 
   P( - \bar{T}_{ij} \geq t_{ij}, \bar{T}_{i'j'} \geq t_{i'j'}) +  \sum_{\substack{(i, j) \in \mathcal{H}_0\cup \mathcal{H}_+ \\(i', j')\in \mathcal{H}_0\cup \mathcal{H}_+ \\ |\{i, j\} \cap \{i', j'\}|= 1} } 
   P( - \bar{T}_{ij} \geq t_{ij}, -\bar{T}_{i'j'} \geq t_{i'j'}) +  \notag\\
   &  \sum_{\substack{(i, j) \in \mathcal{H}_0\cup \mathcal{H}_- \\(i', j')\in \mathcal{H}_0\cup \mathcal{H}_- \\ |\{i, j\} \cap \{i', j'\}|= 1} } 
   P(  \bar{T}_{ij} \geq t_{ij}, \bar{T}_{i'j'} \geq t_{i'j'}) 
  \qquad \Biggr\} \Bigl(\epsilon \bar{\Phi}(t)(q + q_0)\Bigr)^{-2}  + O\left(\frac{1}{m} + \frac{1}{\bar{\Phi}(t)m^2} + \frac{1}{n^c}\right) \notag\\
   &\stackrel{(b)}{\lesssim} \Biggl\{   \sum_{\substack{(i, j) \in \mathcal{H}_0\cup \mathcal{H}_+ \\(i', j')\in \mathcal{H}_0\cup \mathcal{H}_- \\ |\{i, j\} \cap \{i', j'\}|= 1} } 
   P( - T^*_{ij} \geq(1 - (\log m)^{-2}) t_{ij}, T^*_{i'j'} \geq (1 - (\log m)^{-2}) t_{i'j'})   \Biggr\} \Bigl(\epsilon \bar{\Phi}(t)(q + q_0)\Bigr)^{-2} \notag\\
   &\quad +  O\left(\frac{1}{m} + \frac{1}{\bar{\Phi}(t)m^2} + \frac{1}{n^c}  + \frac{1}{m \bar{\Phi}^2(x)n^{c+r}} \right)      \notag\\
&  \stackrel{(c)}{\lesssim} 
 O\left(\frac{\exp (-t^2 / (1+ \delta)}{\bar{\Phi}^2(t) m(1+ t)^2}\right) + 
 O\left(\frac{1}{\bar{\Phi}^2 (t) m n^{c+r}}\right) +
O\left(\frac{1}{m} + \frac{1}{\bar{\Phi}(t)m^2} + \frac{1}{n^c}  + \frac{1}{m \bar{\Phi}^2(x)n^{c+r}} \right) \notag\\
&  \stackrel{(d)}{=}  O\left( \frac{\exp(\delta (1 + \delta)^{-1}t^2)}{m} \right) + O\left( \frac{(1+t)^2 \exp(t^2)}{m n^{c+r}}\right) + 
O\left(\frac{1}{m} + \frac{(1+t) \exp(t^2/2)}{m^2} + \frac{1}{n^c}  + \frac{(1+t)^2 \exp(t^2)}{m n^{c+r}} \right)
\notag\\
&\stackrel{(e)}{=}  O\Biggl( m^{\frac{\delta -1}{1 + \delta}} + \frac{(1 + \sqrt{2 \log m})^2m}{n^{c+r}} + \frac{1}{m} + \frac{1 +\sqrt{ 2 \log m}}{m} + \frac{1}{n^c}
+ \frac{(1 + \sqrt{2 \log m})^2 m }{n^{c+r}} \Biggr)\label{lastlastOfcomplicated}
\end{align}

We now explain each of the annotated (in)equalities above:
\begin{enumerate}[$(a)$]
\item It results from the applications of \lemref{BermanBdd}$(i)$ to the right hand side of \eqref{bigexpansion}. and recognizing that $q + q_0$ is of the same order  as $m^2$, 
\item From the maximal inequality for the sample variances (\lemref{max_ineq_variances}), we have that with probability of at least $1 - O(n^{-r-c})$, \[
\sqrt{\frac{\hat{\sigma}_i^2/n_i + \hat{\sigma}_j^2/n_j}{\sigma_i^2/n_i + \sigma_j^2/n_j}}
= \sqrt{1 - \frac{( \sigma_i^2 - \hat{\sigma}_i^2)/n_i + ( \sigma_j^2 - \hat{\sigma}_j^2)/n_j}{\sigma_i^2/n_i + \sigma_j^2/n_j}}
 \geq 
\sqrt{ 1 - (\log m)^{-2}}
\]
for all $(i, j)$, which also implies that
\begin{multline*}
   P( - \bar{T}_{ij} \geq t_{ij}, \bar{T}_{i'j'} \geq t_{i'j'}) \leq \\
    P( - T^*_{ij} \geq(1 - (\log m)^{-2}) t_{ij}, T^*_{i'j'} \geq (1 - (\log m)^{-2}) t_{i'j'}) + O(n^{-r-c}).
\end{multline*}
To save space we have only shown the summands satisfying $(i, j) \in \mathcal{H}_0 \cup \mathcal{H}_+$ and  $(i', j') \in \mathcal{H}_0 \cup \mathcal{H}_-$, where the other summands are absorbed into ``$\lesssim$".
\item This results from the application of \lemref{BermanBdd}$(ii)$.

\item We used the fact that $(1 + t)^{-1} \phi(t)\lesssim \bar{\Phi}(t)$.

\item Substituting $\sqrt{2\log m}$ for $t$.
\end{enumerate}

Collecting \eqref{intandsum}, \eqref{switchbk} and \eqref{lastlastOfcomplicated}, we have proved \eqref{finalprobconv} under the asymptotic regime in \assumpref{moment_regime}.

\section{Numerical studies} \label{sec:numeric}

We conducted a numeric study  to shed light on our main theoretical result (\thmref{main}). For simplicity, we only consider balanced and homogenous setups where  $n_1 = \dots = n_m = n$ and $\sigma_1 = \dots = \sigma_m$. We generate $m$ population means $\mu_1, \dots, \mu_m$ such that the first $\mu_1, \dots, \mu_{m_0}$ are set to be zero, with $m_0 \leq m$ picked in such a way that 
\[
m_0  = \max \left\{ m' : 1 \leq m' \leq m\text{ and }{m' \choose 2} \leq \beta {m \choose 2} \right\},
\]
for some $\beta \in [0, 1)$, and the other $\mu_{m_0 +1} \dots \mu_m$ are
 i.i.d. realizations of a mean-zero normal distribution 
\[
N(\text{mean} =0, \text{sd} = \emph{effect size}),
\]
 where \emph{effect size} is a chosen value for the standard deviation, named as such since the larger \emph{effect size} is,  the larger are the magnitudes of the pairwise differences $\mu_i - \mu_j$, $ m_0+1 \leq i < j \leq m$; note that $\beta$ is roughly the same as $q_0/q$, the proportion of true nulls. Then for each $1 \leq i \leq m$, we generate $n$ i.i.d. data
 \[
 X_{ki} = \mu_i +  \epsilon_{ki}, \quad 1 \leq k \leq n,
 \]
 where $\epsilon_{k i}$ are independent 
  t-distributed error terms  with $12$ degrees of freedom, and apply the BH procedure in \defref{dBH} at level $\alpha$  using ${\Phi}$ as the reference distribution function to calibrate the p-values. For a given set of  $\mu_1, \dots, \mu_m$, the experiment is repeated 500 times to empirically estimate the performance of the BH procedure.  
At $\alpha = 0.2$, the empirical probabilities of the dFDP meeting (i.e. being less than) the desired target $\frac{\alpha}{2}(1+ \frac{q_0}{q})$ and the empirical dFDR's are reported  in \tabsref{FDP_beta_0_alpha_0.2} to \tabssref{FDR_beta_0.5_alpha_0.2} for $\beta = 0, 0.25, 0.5$, as well as for different combinations of $(m, n,  \emph{effect size})$; additional results for $\alpha = 0.1$ and $\alpha = 0.3$ are shown in  \appref{additional_numerical}.

Note that from the proof of \thmref{main} in \secref{main}, it is seen that the condition \eqref{signalsize} is a kind of minimalist assumption to ensure that, \emph{asymptotically}, the small two-sided p-values are prevalent enough so that the event 
 \begin{equation} \label{critical_event}
 2 \bar{\Phi}(\sqrt{2 \log m}) \leq \frac{\alpha \sum_{1 \leq i < j \leq m}  {\bf 1}_{(p_{|ij|} \leq 2 \bar{\Phi}(\sqrt{2 \log m}))}}{q}
 \end{equation}
happens with an overwhelming probability so that the p-value cutoff $\hat{\alpha}$ is not too small; revisit \eqref{thresholdbdd} and \eqref{critical_mass}. For finite $m$ and $n$, the event in \eqref{critical_event} is only true when there are enough small $p$-values, so one may generally expect better dFDP control in a signal rich environment. This is borne out  by  \tabsref{FDP_beta_0_alpha_0.2}, \tabssref{FDP_beta_0.25_alpha_0.2} and \tabssref{FDP_beta_0.5_alpha_0.2}, where it is seen that as $\beta$ increases, i.e. more true nulls are present in the system, the empirical probabilities 
$P(dFDP \leq \frac{\alpha}{2}\left( 1 + q_0/q\right))$ become progressively lower than $1$.  Moreover, for each of \tabsref{FDP_beta_0_alpha_0.2}, \tabssref{FDP_beta_0.25_alpha_0.2} and \tabssref{FDP_beta_0.5_alpha_0.2}, the larger the $\emph{effect size}$, the larger are these probabilities generally tending to become, which also provide further evidence for the near necessity of  a condition like \eqref{signalsize}.
The empirical probabilities in \tabref{FDP_beta_0.5_alpha_0.2} for \emph{effectsize} $= 0.05$ violate the latter trend somewhat,  as they tend to be larger than the same numbers for \emph{effectsize} $= 0.25$ when $m$ or $n$ are smaller. However, since \eqref{FDPcontrol} in \thmref{main}  is an asymptotic statement for large $m, n$, numerical results for finite $m, n$ should be interpreted with caution.

Lastly, except when $n$ is too small compared to $m$, the dFDR targets are met as seen in \tabsref{FDR_beta_0_alpha_0.2}, \tabssref{FDR_beta_0.25_alpha_0.2} and \tabssref{FDR_beta_0.5_alpha_0.2}.


\section{Discussion} \label{sec:conclude}

In summary, our work has established the validity of the BH procedure for pairwise comparisons in a flexible asymptotic framework suitable for a relatively large number of groups which does not require a strictly balanced design and variance homogeneity (\assumpsref{balance_homo}), by demonstrating that the dependence among all the two-sample t-statistics is weak enough to induce a requisite uniform weak law of large numbers. Using the Cram\'er-type moderate deviation as a tool, our result is present under  minimal moment assumptions, and hence the growth rate of the number of groups is polynomial in the sample size  (\assumpref{moment_regime}), as opposed to exponentially in the sample size had sub-Gaussian tails been assumed like in the related work of  \citet{phase} focusing on one-sample t-statistics. On a related note, while we have assumed that the $p$-values are calibrated  with deterministic reference distributions such as the standard normal (\assumpref{ref_null}), it is also possible to establish our results for calibration with bootstrap distributions, which has the potential to better the approximation accuracy \citep{fan2007many,delaigle2011robustness}. However, as demonstrated in the numerical studies of \citet{phase}, for heavy-tailed situations, bootstrap calibration has limited advantage in practice; we decided not to pursue this embellishment for simplicity.

While the BH procedure serves as a benchmarking procedure,  in recent years, other multiple-testing methods for controlling FDR-related quantities are  in active development.  It was brought to our attention by the Associate Editor that, to account for the dependence among the statistics for FDR testing, a recent series of works by \citet{zhou2018new, fan2019farmtest} and other references therein impose an \emph{approximate} factor model on the covariance structure between the test statistics, i.e. the covariance matrix is assumed to be the sum of a low rank  and an approximately diagonal \emph{uniqueness} matrix. In a nutshell, their proposal is  to subtract away the common factor structure among the statistics from the data prior to forming the p-values for the BH, with the goal of offering better false discovery control and improving the testing efficiency.  The pair comparison \eqref{a0} does have a natural factor structure: Let ${\bf X} = (X_1, \dots, X_m) =_d (X_{k1}, \dots, X_{km})^T$, i.e.  a random vector having the same distribution as $(X_{k1}, \dots, X_{km})^T$ whose corresponding mean is ${\boldsymbol \mu}_{\bf X} = (\mu_1, \dots, \mu_m)^T$, and define the ${m \choose 2}$-vector
\begin{equation} 
{\bf Y} = (\underbrace{X_1 - X_2, \dots, X_1 - X_m}_{ m - 1 \text{ times }}, \underbrace{X_2 - X_3, \dots, X_2 - X_m}_{ m - 2 \text{ times }}, \dots, , \underbrace{X_{m-1} - X_m}_{1 \text{ times}})^T.
\end{equation}
One can  then write the factor representation
\begin{equation} \label{Y_factor_struct}
{\bf Y} = {\boldsymbol \mu}_{\bf Y} + {\bf L} {\bf f},
\end{equation}
where
 \[
{\boldsymbol \mu}_{\bf Y} := (\underbrace{\mu_1 - \mu_2, \dots, \mu_1 - \mu_m}_{ m - 1 \text{ times }}, \underbrace{\mu_2 - \mu_3, \dots, \mu_2 - \mu_m}_{ m - 2 \text{ times }}, \dots, , \underbrace{\mu_{m-1} - \mu_m}_{1 \text{ times}})^T.
\]
captures the mean differences in \eqref{a0}, 
$
{\bf f} := {\bf X} - {\boldsymbol \mu}_{\bf X}
$
serves as an \emph{unobserved} mean-zero hidden factor with $m$ elements, and ${\bf L} = (L_{ij, l})$ is a ${m \choose 2} \times m$ loading matrix such that 
\[
 L_{ij, l} = 
  \begin{cases} 
   1       & \text{if } l = i\\
     - 1       & \text{if }  l = j\\
      0  & \text{if }  \text{ otherwise} 
  \end{cases}
\]
for $1 \leq i < j \leq m$ and $1 \leq l \leq m$. However, in the oracle case where ${\bf L}$ and ${\bf f}$ are hypothetically assumed to be observed, there is conceptual difficulty of applying \citet{zhou2018new, fan2019farmtest}'s approach  to test the mean differences in  ${\boldsymbol \mu}_{\bf Y} $, because upon subtracting away the factor structure, the residual ${\bf Y} - {\bf L} {\bf f}$ is a degenerate  vector  with no randomness to form the p-values, due to the lack of the usual idiosyncratic errors accounting for the uniqueness variances in the factor representation \eqref{Y_factor_struct}. In unreported numerical studies, we have also experimented  with the practical version of their approach where the underlying factors and loadings are estimated from the data ${\bf Y}$ (and hence not necessarily coinciding with ${\bf L} {\bf f}$), but the resulting false discovery control is very unstable; this is most possibly due to the fact that the underlying theory of \citet[Assumption $1(iv)$]{fan2019farmtest} still relies upon a non-degenerate uniqueness component in the  covariance structure.

%
%

In this paper, we treated the classical setup with only a single one-way ANOVA model. By comparison, one typical modern application  in genomics involves an experimental design where the measured expression levels of thousands, or even tens of thousands, of genes under several treatment groups are described by a multitude of one-way ANOVA models, each of which corresponds to a gene and has its own set of pairwise comparisons  \citep{yekutieli2006multiplicity, yekutieli2008false, reiner2007fdr}. A natural question is whether our techniques can be used to prove, in an asymptotic regime where the number of genes tends to infinity,  that the BH procedure is still valid when applied to the pairwise comparisons across all the genes en masse. This will certainly come down to how dependent the expression measurements between different genes are \citep{reiner2003identifying}.
One can also ask whether there are other more preferable procedures than the BH; in particular, hierarchical testing procedures have been proposed in the recent literature \citep{hFDR, hassall2018beyond}. These are beyond the scope here and we will leave them for future research.

\newpage
\begin{table}[t]
\centering
\caption{Estimates of $P(dFDP \leq \frac{\alpha}{2}\left( 1 + q_0/q\right))$ for the BH procedure in \defref{dBH}  at level $\alpha = 0.2$, based on $500$ repetitions  data are generated from the model  in \secref{numeric}, 
  for   combinations of $(m, n)$ and  $q_0/q  =  \beta = 0$.   } 
\begin{tabular}{c|cccccc|cccccc}
  \hline
$ m \backslash n$ & 20 & 40 & 100 & 200 & 400 & 600 & 20 & 40 & 100 & 200 & 400 & 600 \\ 
  \hline
& \multicolumn{6}{c|}{ \emph{effect size} =  0.05} & \multicolumn{6}{c}{ \emph{effect size} = 0.25} \\
   \hline
15 & 0.86 & 0.88 & 0.90 & 0.91 & 0.92 & 0.91 & 0.88 & 0.94 & 0.99 & 1.00 & 1.00 & 1.00 \\ 
  30 & 0.85 & 0.87 & 0.91 & 0.91 & 0.93 & 0.98 & 0.96 & 0.99 & 1.00 & 1.00 & 1.00 & 1.00 \\ 
  50 & 0.78 & 0.87 & 0.90 & 0.91 & 0.95 & 0.99 & 0.96 & 1.00 & 1.00 & 1.00 & 1.00 & 1.00 \\ 
  70 & 0.75 & 0.85 & 0.88 & 0.93 & 0.97 & 1.00 & 0.97 & 1.00 & 1.00 & 1.00 & 1.00 & 1.00 \\ 
  90 & 0.71 & 0.83 & 0.87 & 0.90 & 0.98 & 0.99 & 0.97 & 1.00 & 1.00 & 1.00 & 1.00 & 1.00 \\ 
  120 & 0.71 & 0.83 & 0.89 & 0.92 & 0.98 & 1.00 & 0.99 & 1.00 & 1.00 & 1.00 & 1.00 & 1.00 \\ 
   \hline
\hline
& \multicolumn{6}{c|}{ \emph{effect size} =  0.45} & \multicolumn{6}{c}{ \emph{effect size} = 0.65} \\
   \hline
15 & 0.97 & 1.00 & 1.00 & 1.00 & 1.00 & 1.00 & 0.99 & 1.00 & 1.00 & 1.00 & 1.00 & 1.00 \\ 
  30 & 1.00 & 1.00 & 1.00 & 1.00 & 1.00 & 1.00 & 1.00 & 1.00 & 1.00 & 1.00 & 1.00 & 1.00 \\ 
  50 & 1.00 & 1.00 & 1.00 & 1.00 & 1.00 & 1.00 & 1.00 & 1.00 & 1.00 & 1.00 & 1.00 & 1.00 \\ 
  70 & 1.00 & 1.00 & 1.00 & 1.00 & 1.00 & 1.00 & 1.00 & 1.00 & 1.00 & 1.00 & 1.00 & 1.00 \\ 
  90 & 1.00 & 1.00 & 1.00 & 1.00 & 1.00 & 1.00 & 1.00 & 1.00 & 1.00 & 1.00 & 1.00 & 1.00 \\ 
  120 & 1.00 & 1.00 & 1.00 & 1.00 & 1.00 & 1.00 & 1.00 & 1.00 & 1.00 & 1.00 & 1.00 & 1.00 \\ 
   \hline
\end{tabular}
  \label{tab:FDP_beta_0_alpha_0.2}
\end{table}
\begin{table}[ht]
\centering
\caption{Estimates of $dFDR$ for the BH procedure in \defref{dBH}  at level $\alpha = 0.2$, based on $500$ repetitions of  data  generated from the  model  in \secref{numeric}, 
  for   combinations of $(m, n)$ and  $q_0/q = \beta =  0$; 
  the dFDR target is  $\frac{\alpha}{2}\left( 1 + q_0/q\right) = 0.1$.   
  } 
\begin{tabular}{c|cccccc|cccccc}
  \hline
$ m \backslash n$ & 20 & 40 & 100 & 200 & 400 & 600 & 20 & 40 & 100 & 200 & 400 & 600 \\ 
  \hline
& \multicolumn{6}{c|}{ \emph{effect size} =  0.05} & \multicolumn{6}{c}{ \emph{effect size} = 0.25} \\
   \hline
15 & 0.08 & 0.05 & 0.04 & 0.03 & 0.03 & 0.03 & 0.04 & 0.02 & 0.02 & 0.01 & 0.01 & 0.01 \\ 
  30 & 0.07 & 0.05 & 0.03 & 0.02 & 0.03 & 0.02 & 0.03 & 0.02 & 0.01 & 0.01 & 0.01 & 0.01 \\ 
  50 & 0.09 & 0.05 & 0.03 & 0.03 & 0.02 & 0.02 & 0.03 & 0.02 & 0.01 & 0.01 & 0.01 & 0.01 \\ 
  70 & 0.11 & 0.06 & 0.03 & 0.03 & 0.03 & 0.02 & 0.03 & 0.02 & 0.01 & 0.01 & 0.01 & 0.01 \\ 
  90 & 0.12 & 0.06 & 0.03 & 0.03 & 0.02 & 0.02 & 0.03 & 0.02 & 0.02 & 0.01 & 0.01 & 0.01 \\ 
  120 & 0.11 & 0.06 & 0.03 & 0.03 & 0.03 & 0.02 & 0.03 & 0.02 & 0.02 & 0.01 & 0.01 & 0.01 \\ 
   \hline
\hline
& \multicolumn{6}{c|}{ \emph{effect size} =  0.45} & \multicolumn{6}{c}{ \emph{effect size} = 0.65} \\
   \hline
15 & 0.02 & 0.01 & 0.01 & 0.01 & 0.01 & 0.01 & 0.02 & 0.01 & 0.01 & 0.01 & 0.00 & 0.00 \\ 
  30 & 0.02 & 0.01 & 0.01 & 0.01 & 0.00 & 0.00 & 0.01 & 0.01 & 0.01 & 0.00 & 0.00 & 0.00 \\ 
  50 & 0.02 & 0.01 & 0.01 & 0.01 & 0.00 & 0.00 & 0.01 & 0.01 & 0.01 & 0.00 & 0.00 & 0.00 \\ 
  70 & 0.02 & 0.01 & 0.01 & 0.01 & 0.00 & 0.00 & 0.01 & 0.01 & 0.01 & 0.00 & 0.00 & 0.00 \\ 
  90 & 0.02 & 0.01 & 0.01 & 0.01 & 0.00 & 0.00 & 0.01 & 0.01 & 0.01 & 0.00 & 0.00 & 0.00 \\ 
  120 & 0.02 & 0.01 & 0.01 & 0.01 & 0.00 & 0.00 & 0.02 & 0.01 & 0.01 & 0.00 & 0.00 & 0.00 \\ 
   \hline
\end{tabular}
   \label{tab:FDR_beta_0_alpha_0.2}
\end{table}

\begin{table}[ht]
\centering
\caption{Estimates of $P(dFDP \leq \frac{\alpha}{2}\left( 1 + q_0/q\right))$ for the BH procedure in \defref{dBH}  at level $\alpha = 0.2$, based on $500$ repetitions  data are generated from the model  in \secref{numeric}, 
  for   combinations of $(m, n)$ and  $q_0/q  \approx \beta = 0.25$.     } 
\begin{tabular}{c|cccccc|cccccc}
  \hline
$ m \backslash n$ & 20 & 40 & 100 & 200 & 400 & 600 & 20 & 40 & 100 & 200 & 400 & 600 \\ 
  \hline
& \multicolumn{6}{c|}{ \emph{effect size} =  0.05} & \multicolumn{6}{c}{ \emph{effect size} = 0.25} \\
   \hline
15 & 0.84 & 0.85 & 0.88 & 0.89 & 0.86 & 0.86 & 0.81 & 0.83 & 0.84 & 0.90 & 0.95 & 0.95 \\ 
  30 & 0.82 & 0.84 & 0.87 & 0.85 & 0.81 & 0.82 & 0.77 & 0.83 & 0.95 & 0.98 & 0.98 & 0.99 \\ 
  50 & 0.76 & 0.85 & 0.88 & 0.84 & 0.82 & 0.82 & 0.72 & 0.81 & 0.93 & 0.97 & 1.00 & 1.00 \\ 
  70 & 0.73 & 0.83 & 0.85 & 0.86 & 0.81 & 0.79 & 0.66 & 0.80 & 0.92 & 0.97 & 0.99 & 1.00 \\ 
  90 & 0.69 & 0.81 & 0.84 & 0.85 & 0.83 & 0.82 & 0.63 & 0.78 & 0.94 & 0.99 & 1.00 & 1.00 \\ 
  120 & 0.67 & 0.81 & 0.86 & 0.86 & 0.80 & 0.85 & 0.67 & 0.85 & 0.99 & 1.00 & 1.00 & 1.00 \\ 
   \hline
\hline
& \multicolumn{6}{c|}{ \emph{effect size} =  0.45} & \multicolumn{6}{c}{ \emph{effect size} = 0.65} \\
   \hline
15 & 0.76 & 0.84 & 0.93 & 0.96 & 0.97 & 0.98 & 0.83 & 0.88 & 0.94 & 0.97 & 0.99 & 0.99 \\ 
  30 & 0.88 & 0.94 & 0.98 & 0.99 & 0.99 & 1.00 & 0.94 & 0.97 & 0.99 & 0.99 & 1.00 & 1.00 \\ 
  50 & 0.84 & 0.94 & 0.99 & 1.00 & 1.00 & 1.00 & 0.95 & 0.97 & 0.99 & 1.00 & 1.00 & 1.00 \\ 
  70 & 0.83 & 0.95 & 0.99 & 0.99 & 1.00 & 1.00 & 0.93 & 0.99 & 1.00 & 0.99 & 1.00 & 1.00 \\ 
  90 & 0.87 & 0.95 & 0.99 & 1.00 & 1.00 & 1.00 & 0.95 & 0.99 & 1.00 & 1.00 & 1.00 & 1.00 \\ 
  120 & 0.92 & 0.99 & 1.00 & 1.00 & 1.00 & 1.00 & 0.98 & 1.00 & 1.00 & 1.00 & 1.00 & 1.00 \\ 
   \hline
\end{tabular}
\label{tab:FDP_beta_0.25_alpha_0.2}
\end{table}

\begin{table}[ht]
\centering
\caption{Estimates of $dFDR$ for the BH procedure in \defref{dBH}  at level $\alpha = 0.2$, based on $500$ repetitions of  data  generated from the model  in \secref{numeric}, 
  for   combinations of $(m, n)$ and  $q_0/q \approx  \beta = 0.25$; 
  the dFDR target is  $\frac{\alpha}{2}\left( 1 + q_0/q\right) \approx 0.125$.  } 
\begin{tabular}{c|cccccc|cccccc}
  \hline
$ m \backslash n$ & 20 & 40 & 100 & 200 & 400 & 600 & 20 & 40 & 100 & 200 & 400 & 600 \\ 
  \hline
& \multicolumn{6}{c|}{ \emph{effect size} =  0.05} & \multicolumn{6}{c}{ \emph{effect size} = 0.25} \\
   \hline
15 & 0.10 & 0.08 & 0.07 & 0.05 & 0.06 & 0.05 & 0.07 & 0.05 & 0.05 & 0.05 & 0.04 & 0.04 \\ 
  30 & 0.09 & 0.08 & 0.06 & 0.05 & 0.06 & 0.06 & 0.07 & 0.06 & 0.06 & 0.05 & 0.05 & 0.05 \\ 
  50 & 0.15 & 0.08 & 0.06 & 0.07 & 0.06 & 0.06 & 0.08 & 0.07 & 0.06 & 0.06 & 0.06 & 0.05 \\ 
  70 & 0.16 & 0.10 & 0.07 & 0.05 & 0.06 & 0.06 & 0.10 & 0.08 & 0.06 & 0.06 & 0.06 & 0.06 \\ 
  90 & 0.18 & 0.11 & 0.08 & 0.06 & 0.05 & 0.06 & 0.11 & 0.08 & 0.06 & 0.06 & 0.06 & 0.06 \\ 
  120 & 0.18 & 0.12 & 0.06 & 0.06 & 0.06 & 0.06 & 0.10 & 0.08 & 0.06 & 0.06 & 0.06 & 0.06 \\ 
   \hline
\hline
& \multicolumn{6}{c|}{ \emph{effect size} =  0.45} & \multicolumn{6}{c}{ \emph{effect size} = 0.65} \\
   \hline
15 & 0.06 & 0.05 & 0.05 & 0.04 & 0.04 & 0.04 & 0.05 & 0.05 & 0.04 & 0.04 & 0.04 & 0.04 \\ 
  30 & 0.06 & 0.06 & 0.05 & 0.05 & 0.05 & 0.05 & 0.06 & 0.06 & 0.05 & 0.05 & 0.05 & 0.05 \\ 
  50 & 0.07 & 0.06 & 0.06 & 0.05 & 0.05 & 0.05 & 0.06 & 0.06 & 0.05 & 0.05 & 0.05 & 0.05 \\ 
  70 & 0.08 & 0.07 & 0.06 & 0.06 & 0.06 & 0.05 & 0.07 & 0.06 & 0.06 & 0.05 & 0.05 & 0.05 \\ 
  90 & 0.08 & 0.07 & 0.06 & 0.06 & 0.06 & 0.05 & 0.07 & 0.06 & 0.06 & 0.05 & 0.05 & 0.05 \\ 
  120 & 0.08 & 0.07 & 0.06 & 0.06 & 0.05 & 0.05 & 0.07 & 0.06 & 0.05 & 0.05 & 0.05 & 0.05 \\ 
   \hline
\end{tabular}
\label{tab:FDR_beta_0.25_alpha_0.2}
\end{table}

\begin{table}[ht]
\centering
\caption{Estimates of $P(dFDP \leq \frac{\alpha}{2}\left( 1 + q_0/q\right))$ for the BH procedure in \defref{dBH}  at level $\alpha = 0.2$, based on $500$ repetitions  data are generated from the model  in \secref{numeric}, 
  for   combinations of $(m, n)$ and  $q_0/q  \approx  \beta = 0.5$.   } 
\begin{tabular}{c|cccccc|cccccc}
  \hline
$ m \backslash n$& 20 & 40 & 100 & 200 & 400 & 600 & 20 & 40 & 100 & 200 & 400 & 600 \\ 
  \hline
& \multicolumn{6}{c|}{ \emph{effect size} =  0.05} & \multicolumn{6}{c}{ \emph{effect size} = 0.25} \\
   \hline
15 & 0.83 & 0.84 & 0.88 & 0.87 & 0.84 & 0.85 & 0.82 & 0.80 & 0.76 & 0.72 & 0.76 & 0.73 \\ 
  30 & 0.81 & 0.85 & 0.86 & 0.85 & 0.80 & 0.77 & 0.75 & 0.71 & 0.73 & 0.78 & 0.78 & 0.80 \\ 
  50 & 0.73 & 0.84 & 0.86 & 0.84 & 0.80 & 0.79 & 0.67 & 0.66 & 0.72 & 0.77 & 0.78 & 0.82 \\ 
  70 & 0.72 & 0.81 & 0.84 & 0.86 & 0.78 & 0.76 & 0.59 & 0.65 & 0.71 & 0.79 & 0.80 & 0.83 \\ 
  90 & 0.67 & 0.80 & 0.80 & 0.84 & 0.81 & 0.75 & 0.55 & 0.64 & 0.73 & 0.84 & 0.89 & 0.91 \\ 
  120 & 0.65 & 0.80 & 0.86 & 0.83 & 0.81 & 0.78 & 0.54 & 0.63 & 0.80 & 0.88 & 0.92 & 0.95 \\ 
   \hline
\hline
& \multicolumn{6}{c|}{ \emph{effect size} =  0.45} & \multicolumn{6}{c}{ \emph{effect size} = 0.65} \\
   \hline
15 & 0.74 & 0.71 & 0.74 & 0.75 & 0.79 & 0.77 & 0.71 & 0.69 & 0.75 & 0.77 & 0.80 & 0.79 \\ 
  30 & 0.68 & 0.71 & 0.80 & 0.82 & 0.80 & 0.82 & 0.71 & 0.76 & 0.81 & 0.83 & 0.83 & 0.84 \\ 
  50 & 0.62 & 0.69 & 0.80 & 0.83 & 0.84 & 0.87 & 0.67 & 0.73 & 0.83 & 0.85 & 0.86 & 0.88 \\ 
  70 & 0.60 & 0.71 & 0.78 & 0.85 & 0.86 & 0.88 & 0.66 & 0.76 & 0.80 & 0.87 & 0.90 & 0.89 \\ 
  90 & 0.59 & 0.73 & 0.84 & 0.90 & 0.93 & 0.94 & 0.70 & 0.80 & 0.87 & 0.92 & 0.94 & 0.95 \\ 
  120 & 0.63 & 0.80 & 0.91 & 0.93 & 0.96 & 0.96 & 0.75 & 0.87 & 0.93 & 0.95 & 0.96 & 0.96 \\ 
   \hline
\end{tabular}
\label{tab:FDP_beta_0.5_alpha_0.2}
\end{table}
\begin{table}[t!]
\centering
\caption{Estimates of $dFDR$ for the BH procedure in \defref{dBH}  at level $\alpha = 0.2$, based on $500$ repetitions of  data  generated from the model  in \secref{numeric}, 
  for   combinations of $(m, n)$ and  $q_0/q \approx \beta =  0.5$; 
  the dFDR target is  $\frac{\alpha}{2}\left( 1 + q_0/q\right) \approx \beta = 0.15$.  } 
\begin{tabular}{c|cccccc|cccccc}
  \hline
$ m \backslash n$ & 20 & 40 & 100 & 200 & 400 & 600 & 20 & 40 & 100 & 200 & 400 & 600 \\ 
  \hline
& \multicolumn{6}{c|}{ \emph{effect size} =  0.05} & \multicolumn{6}{c}{ \emph{effect size} = 0.25} \\
   \hline
15 & 0.13 & 0.12 & 0.08 & 0.09 & 0.10 & 0.08 & 0.10 & 0.10 & 0.08 & 0.09 & 0.08 & 0.09 \\ 
  30 & 0.14 & 0.11 & 0.09 & 0.09 & 0.09 & 0.10 & 0.11 & 0.10 & 0.10 & 0.10 & 0.10 & 0.10 \\ 
  50 & 0.19 & 0.12 & 0.10 & 0.10 & 0.09 & 0.08 & 0.15 & 0.12 & 0.11 & 0.11 & 0.10 & 0.10 \\ 
  70 & 0.21 & 0.15 & 0.11 & 0.08 & 0.10 & 0.09 & 0.17 & 0.12 & 0.11 & 0.11 & 0.11 & 0.11 \\ 
  90 & 0.24 & 0.15 & 0.13 & 0.08 & 0.08 & 0.08 & 0.16 & 0.12 & 0.11 & 0.10 & 0.10 & 0.10 \\ 
  120 & 0.24 & 0.14 & 0.08 & 0.09 & 0.07 & 0.08 & 0.17 & 0.13 & 0.11 & 0.10 & 0.10 & 0.10 \\ 
   \hline
\hline
& \multicolumn{6}{c|}{ \emph{effect size} =  0.45} & \multicolumn{6}{c}{ \emph{effect size} = 0.65} \\
   \hline
15 & 0.10 & 0.10 & 0.09 & 0.09 & 0.08 & 0.09 & 0.10 & 0.10 & 0.09 & 0.09 & 0.08 & 0.09 \\ 
  30 & 0.11 & 0.11 & 0.10 & 0.10 & 0.10 & 0.10 & 0.11 & 0.10 & 0.10 & 0.09 & 0.10 & 0.10 \\ 
  50 & 0.13 & 0.11 & 0.10 & 0.10 & 0.10 & 0.10 & 0.12 & 0.11 & 0.10 & 0.10 & 0.10 & 0.10 \\ 
  70 & 0.14 & 0.12 & 0.11 & 0.10 & 0.11 & 0.10 & 0.13 & 0.11 & 0.10 & 0.10 & 0.10 & 0.10 \\ 
  90 & 0.14 & 0.11 & 0.11 & 0.10 & 0.10 & 0.10 & 0.12 & 0.11 & 0.10 & 0.10 & 0.10 & 0.10 \\ 
  120 & 0.13 & 0.11 & 0.10 & 0.10 & 0.10 & 0.10 & 0.12 & 0.11 & 0.10 & 0.10 & 0.10 & 0.10 \\ 
   \hline
\end{tabular}
\label{tab:FDR_beta_0.5_alpha_0.2}
\end{table}

\newpage
\ \
\newpage

\appendix

\section{Maximal inequality of the sample variances}

The following inequality has been used in the proofs of \secsref{main} and \secssref{ULLNpf}.

\begin{lemma}[Maximal inequality of the sample variances] \label{lem:max_ineq_variances}
Under \assumpref{moment_regime}, we have 
\[
P\left(\max_{1 \leq i \leq m}\left|\frac{\hat{\sigma}_i^2}{\sigma_i^2}  - 1 \right| > (\log m)^{-2}\right) = O(n^{-r - c}) \text{ as } m, n \longrightarrow \infty.
\]
\end{lemma}

\begin{proof}
Note that for all $1 \leq i \leq m$,
\[
\frac{\hat{\sigma}_i^2}{\sigma_i^2} = \frac{\sum_{k = 1}^n (X_{ki}- \bar{X}_i)^2}{(n-1)\sigma_i^2} = \frac{\sum_{k = 1}^n (X_{ki}- \mu_i)^2}{(n-1)\sigma_i^2}- \frac{n(\bar{X}_i - \mu_i)^2}{(n-1)\sigma^2_i},
\]
which in turn gives
\begin{align*}
\left|\frac{\hat{\sigma}_i^2}{\sigma_i^2}  - 1 \right| &\leq  \left| \frac{n(\bar{X}_i - \mu_i)^2}{(n-1)\sigma^2_i}\right| 
+ \left|\frac{\sum_{k = 1}^n [(X_{ki}- \mu_i)^2 - \sigma_i^2]}{(n-1)\sigma_i^2} \right|
+ \left| \frac{n}{n-1}  - 1 \right|\\
& = \left|\frac{\sum_{k=1}^n (X_{ki} - \mu_i)}{\sigma_i\sqrt{n(n-1)}} \right|^2 +  \frac{1}{n-1}\left|\sum_{k=1}^n\left[\frac{ (X_{ki}- \mu_i)^2 }{\sigma_i^2} - 1\right] \right|
+ \frac{1}{n-1}.
\end{align*}
Hence, by a union bound, we get that
\begin{multline} \label{sigmaRatioBdd}
P\left(\max_{1 \leq i \leq m}\left|\frac{\hat{\sigma}_i^2}{\sigma_i^2}  - 1 \right| > (\log m)^{-2}\right)
 \leq P\left(\frac{1}{n-1} > \frac{(\log m)^{-2}}{3}\right)  \\
+ P\left( \max_{1\leq i \leq m}\left| \sum_{k = 1}^n\left[\frac{X _{ki} - \mu_i}{\sigma_i} \right]\right|>  \sqrt{\frac{n(n-1)}{3}} (\log m)^{-1}\right)  \\
+  P\left(\max_{1 \leq i \leq m}\left|\sum_{k = 1}^n\left[\frac{ (X_{ki}- \mu_i)^2 }{\sigma_i^2} - 1\right]\right|  > \frac{(\log m)^{-2}(n-1)}{3}\right)  .
\end{multline}
Since $m \leq C n^r$ by \assumpref{moment_regime},  the first term on the right hand side in \eqref{sigmaRatioBdd} is zero for large enough $n$ (and $m$), and it suffices to show the last two terms are of order $O(n^{-r-c})$. The first term
can be bounded as
\begin{multline*}
P\left( \max_{ 1 \leq i \leq m}\left| \sum_{k = 1}^n\left[\frac{X _{ki} - \mu_i}{\sigma_i} \right]\right|>  \sqrt{\frac{n(n-1)}{3}} (\log m)^{-1}\right) \\
\leq \sum_{i=1}^mP\left(\sum_{k= 1}^n\left|\frac{X _{ki} - \mu_i}{\sigma_i}\right| > \sqrt{\frac{n(n-1)}{3}} (\log m)^{-1} \right)\\
\lesssim m \frac{n^{1 + 2r + \nu/2}(\log m)^{2 + 4r + \nu}}{n^{2 + 4r + \nu}} = \frac{m (\log m)^{2 + 4r + \nu}}{n^{1 + 2r + \nu/2}} \lesssim n^{- r- c},
\end{multline*}
for some $c>0$, where the  inequalities ``$\lesssim$" follow from \citet[Theorem 3]{Rosenthal}'s inequality and  \assumpref{moment_regime}.
For the second term, we have
\begin{multline}\label{2ndterm}
P\left(\max_{1\leq i \leq m}\left|\sum_{k = 1}^n\left[\frac{ (X_{ki}- \mu_i)^2 }{\sigma_i^2} - 1\right]\right|  > \frac{(\log m)^{-2}(n-1)}{3}\right)\leq \\
\sum_{i=1}^m P\left(\left|\sum_{k = 1}^n\left[\frac{ (X_{ki}- \mu_i)^2 }{\sigma_i^2} - 1\right]\right|  > \frac{(\log m)^{-2}(n-1)}{3}\right).
\end{multline}
 For $1 + 2r + \nu/2 \leq 2$, by \citet[Theorem 2]{eseenBahr}'s inequality, $
\mathbb{E}[|\sum_{k = 1}^n( \frac{(X_{ki} -\mu_i)^2}{\sigma_i^2}- 1)|^{1+ 2r + \nu/2} ]  \lesssim n,
$
and hence with the right hand side of \eqref{2ndterm} can be bounded by a term of the order $O(m (\log m)^{4r + 2 + \nu}  n^{- 2r - \nu/2}) = O( n^{- r-c})$. 
 If $1+ 2r + \nu/2 > 2$, then we can apply a Fuk-Nagaev type inequality \citep{Liu2009}[Lemma 6.1] and  \assumpref{moment_regime} to give, for large enough $n$, 
\begin{align*}
&\sum_{i=1}^mP\left(\left|\sum_{k = 1}^n\left[\frac{ (X_{ki}- \mu_i)^2 }{\sigma_i^2} - 1\right]\right|  > \frac{(n-1)}{3 (\log m)^2}\right)
\\
&\leq \sum_{i=1}^m \left\{\sum_{k= 1}^n P\left(\left|\frac{ (X_{ki}- \mu_i)^2 }{\sigma_i^2} - 1\right| > \frac{c(n-1)}{3 (\log m)^2}  \right) +
\exp\left( - \frac{n}{C (\log m)^4}\right) + C n^{- \max(2r+ \nu, 2)} \right\}\\
&\lesssim m \left(\frac{(\log m)^{2+4r + \nu}}{n^{ 2r + \nu/2}} +     n^{- \max(2r+ \nu, 2)}       \right) = O( n ^{-r - c})
\end{align*}
for some small enough $c > 0$ and large enough $C >0 $.


\end{proof}

\section{Further proofs for \secref{ULLNpf}}

We first introduce a lemma that will be used to finish the proofs.
\begin{lemma} \label{lem:tech} Consider an asymptotic regime where $m, n \longrightarrow \infty$. For any constant $c' > \sqrt{2}$ and a sequence $b_n = o(1)$, it must be true that
\begin{multline*}
\bar{\Phi}^{-1}(\bar{\Phi}(t) (1 + b_n)) \leq c' \sqrt{\log m} \\
\text{ for all } t \in [0, \sqrt{2 \log m}] \text{ and a sufficiently large } n \; (\text{and } m).
\end{multline*}
\end{lemma}
\begin{proof}
Let $t_{b_n} = \bar{\Phi}^{-1}(\bar{\Phi}(t) (1 + b_n))$. Since $t \leq \sqrt{2 \log m}$,  it must be always true that $t_{b_n}  \leq a_m$ for a sufficiently large $n$, where $a_m = \bar{\Phi}^{-1}(\bar{\Phi}(\sqrt{2 \log m}) (1 + b_n))$. Since $s\bar{\Phi}(s)/\phi(s) \rightarrow 1$ as $s \longrightarrow \infty$, it must be that
\begin{align*}
\frac{\phi(a_m)\sqrt{2 \log m}}{a_m \phi(\sqrt{2 \log m})}&= \frac{  \bar{\Phi}(a_m) \phi(a_m)}{\bar{\Phi}(a_m)a_m} \times \frac{\sqrt{2 \log m}}{\phi(\sqrt{2 \log m})}
\\
&=  \frac{\bar{\Phi}(\sqrt{2 \log m})\sqrt{2 \log m}}{\phi(\sqrt{2 \log m})} \times \frac{\phi(a_m)}{\bar{\Phi}(a_m)a_m} \times (1 + b_n) \\
&= (1+o(1))(1 + b_n) = 1 + o(1).
\end{align*}
By taking $\log$ on both side of the preceding display, one immediately gets that
\[
\frac{a_m^2}{2} =\log \left(\sqrt{2 \log m}\right) - \log a_m+ \frac{2 \log m}{2} -  \log \Bigl(1 + o(1)\Bigr),
\]
which, in light of the fact that $\log a_m \longrightarrow \infty$ as $n, m \longrightarrow \infty$, implies that
\[
t_{b_n} \leq a_m \leq  \sqrt{2 \log m + 2 \log(\sqrt{2 \log m})  + o(1)} \leq c' \sqrt{\log m}
\]
for a large enough $n$ (and hence $m$) and $c' > \sqrt{2}$.
\end{proof}

\subsection{Proof of \eqref{switchbk}} \label{app:delicate}
We will  show that for a sufficiently large $n$, the event equivalences 
\[
\bar{p}_{ij, L} \leq \bar{\Phi}(t) \iff  - \bar{T}_{ij} \geq \bar{\Phi}^{-1} (\bar{\Phi}(t)  (1 + \epsilon_{|\bar{T}_{ij}|})
\]
and
\[
\bar{p}_{ij, U} \leq  \bar{\Phi}(t) \iff \bar{T}_{ij} \geq   \bar{\Phi}^{-1}( \bar{\Phi}(t)(1 + \epsilon_{|\bar{T}_{ij}|})),
\]
with $\epsilon_{|\bar{T}_{ij}|}$ having the property in  \eqref{eptij}.
To show this we first note the following two events are identical, by their definitions:
\begin{equation} \label{events}
\{\bar{p}_{ij, L} \leq \bar{\Phi}(t)\} = \{ 1 - {F}_{ij} (  - \bar{T}_{ij})\leq \bar{\Phi}(t)\}.
\end{equation}
Fix a constant $c' \in (\sqrt{2}, 2)$. For a sufficiently large $n$ the event in \eqref{events} can be realized for all $ 0 \leq t \leq \sqrt{2 \log m}$ if $- \bar{T}_{ij} \geq c'\sqrt{\log m}$   since, with \assumpref{ref_null},
\[
1 - {F}_{ij} ( - \bar{T}_{ij}) \leq  1 - {F}_{ij} ( c'\sqrt{\log m}) = \bar{\Phi}(c'\sqrt{\log m}) (1 + o(1)) \leq \bar{\Phi} (\sqrt{2 \log m}) \leq  \bar{\Phi} (t)
\]
when $m$ is large enough. But since the event in  \eqref{events} may also be true for some $- \bar{T}_{ij} < c'\sqrt{\log m}$, using  \assumpref{ref_null} and the identity \eqref{events} again,  we can conclude, \emph{conditioning on} $- \bar{T}_{ij} < c'\sqrt{\log m}$, the equivalence of events
\begin{multline*}
\bar{p}_{ij, L} \leq \bar{\Phi}(t) \iff  \frac{1 - F_{ij} (- \bar{T}_{ij})}{\bar{\Phi}(- \bar{T}_{ij})} \bar{\Phi}(- \bar{T}_{ij}) \leq \bar{\Phi}(t) 
 \iff - \bar{T}_{ij} \geq \bar{\Phi}^{-1} (\bar{\Phi}(t)  (1 + \epsilon_{|\bar{T}_{ij}|})).
\end{multline*}
for large enough $n$, where $\epsilon_{|\bar{T}_{ij}|}$ has the property in \eqref{eptij}. However, in light of \lemref{tech}, for large enough $n$, $- \bar{T}_{ij} \geq c'\sqrt{\log m}$ will necessarily imply $- \bar{T}_{ij} \geq \bar{\Phi}^{-1} (\bar{\Phi}(t)  (1 + \epsilon_{|\bar{T}_{ij}|}))$, and hence the train of equivalence in the preceding display is also true \emph{without} conditioning on $- \bar{T}_{ij} < c'\sqrt{\log m}$.  By a completely analogous argument we also have the equivalence of events
\[
\bar{p}_{ij, U} \leq  \bar{\Phi}(t) \iff \bar{T}_{ij} \geq   \bar{\Phi}^{-1}( \bar{\Phi}(t)(1 + \epsilon_{|\bar{T}_{ij}|}))
\]
for large enough $n$.

\subsection{Proof of \lemref{BermanBdd}} \label{app:pfBerman}
For $\epsilon_n$ is as in  \eqref{eptij},  we let 
\[
t_U = \bar{\Phi}^{-1} (\bar{\Phi} (t)( 1 - \epsilon_n))\text{ and }t_L = \bar{\Phi}^{-1} (\bar{\Phi} (t)( 1 + \epsilon_n)),
\]
where   the subscripts are suggestive of the fact that $t_L$ is always less than $t_U$. We also note that
\begin{equation} \label{sandwich}
t_L \leq
 t_{ij} \leq
t_U \text{ for all } (i, j) \text{ and } t\in[0, \sqrt{2\log m}].
\end{equation}

\begin{proof} [Proof of \lemref{BermanBdd}$(i)$] In fact, it suffices to show that for sufficiently large $m$,
\begin{equation} \label{lower}
\frac{P(\bar{T}_{ij}\geq t_U)}{\bar{\Phi}(t)}=1 +O(n^{-c})\end{equation}
\begin{equation} \label{upper}
\frac{P(\bar{T}_{ij}\geq t_L)}{\bar{\Phi}(t)}=1 +O(n^{-c})
\end{equation}
both uniformly in $t \in [0, \sqrt{2 \log m}]$ and all $(i, j)$, in light of \eqref{sandwich}.

 \eqref{lower}: By \lemref{tech}, we note that for large enough $n$,  it is true that $ t_L, t_U \leq c'\sqrt{\log m}$ for a fixed constant $c' > \sqrt{2}$. Since it must be that $0 \leq t_U$,  by \assumpref{moment_regime} and  the Cram\'er-type moderate deviation for two-sample t-statistics (\lemref{changzhou}), we have
\begin{align*}
\frac{P(\bar{T}_{ij}\geq  t_U)}{\bar{\Phi} (t)} &= ( 1 - \epsilon_n)\left(1 +  O(1)\left(\frac{1 +  t_U}{n^{1/2 - 1/(4r + 2 + \nu)}}\right)^{4r + 2 + \nu}\right)\\
&= 1 + O(n^{-c})
\end{align*}
uniformly in all $(i, j)$ and $0 \leq t \leq \sqrt{2 \log m}$, for sufficiently large $n$.

 \eqref{upper}: Since $t_L$ can be a negative number, Cram\'er-type moderate deviation results (\lemref{changzhou}) is not directly applicable. However, we can do a separation argument: For any $t \in [1, \sqrt{2 \log m}]$, for large enough $n$ it must be that $t_L \geq 0$, hence with the same argument for proving \eqref{lower}, we have
\begin{equation*}
\frac{P(\bar{T}_{ij}\geq t_L)}{\bar{\Phi}(t)}=1+O(n^{-c}) \text{ uniformly in }t \in [1, \sqrt{2 \log m}] \text{ and all } (i, j).
\end{equation*}
For any $t \in [0, 1)$, we have
\begin{align} \label{disdisdis}
\frac{P(\bar{T}_{ij}\geq t_L)}{\bar{\Phi} (t)} &= \frac{P(\bar{T}_{ij}\geq t) }{\bar{\Phi} (t)} +  \frac{ P(\bar{T}_{ij}\geq t_L) - P(\bar{T}_{ij} \geq t) }{\bar{\Phi} (t)} \notag \\
 &= 1 + O(n^{-c})+ \frac{ P(\bar{T}_{ij}\geq t_L) - P(\bar{T}_{ij} \geq t) }{\bar{\Phi} (t)}
\end{align}
uniformly in $(i, j)$ and $t \in [0, 1)$, where the last equality is based on \assumpref{moment_regime} and the Cram\'er-type moderate deviation (\lemref{changzhou}). But we also note that
\begin{multline} \label{disdisidis}
\frac{ |P(\bar{T}_{ij}\geq t_L) - P(\bar{T}_{ij} \geq t) |}{\bar{\Phi} (t)}\leq \\
\frac{ |P(\bar{T}_{ij}\geq t_L) - \bar{\Phi}(t_L) |}{\bar{\Phi} (t)} + \frac{ |P(\bar{T}_{ij}\geq t) - \bar{\Phi}(t) |}{\bar{\Phi} (t)} + \frac{ |\bar{\Phi}(t) - \bar{\Phi}(t_L)  |}{\bar{\Phi} (t)}.
\end{multline}
Given that $\bar{\Phi}(t) > C > 0$ for some $C$ on interval $[0, 1)$, on the right hand side of \eqref{disdisidis} the first two terms are of order $O(n^{-c})$ by \lemref{changzhou} (compare \eqref{2sampCramer}), and the last term is so due to a bounded first derivative of $\bar{\Phi}^{-1}$ on the interval $[\bar{\Phi}(1) , \bar{\Phi}(-1)  ]$, all uniformly in $(i, j)$ and $t \in [0, 1)$.
\end{proof}

\begin{proof} [Proof of \lemref{BermanBdd}$(ii)$]
Given \eqref{sandwich}, it suffices to show, for sufficiently large $n$,
\[
P\left(T^*_{i_1j_1} \geq \tilde{t}, T^*_{i_2 j_2} \geq \tilde{t}\right) \leq C (1+ t)^{-2}\exp (-t^2 / (1+ \delta))
\]
uniformly in $0 \leq t \leq  \sqrt{2\log m}$ and  $|\{i_1, j_1\} \cap \{i_2, j_2\}|  = 1$ for some constant  $ \delta > 0$, where $\tilde{t} :=  (1 - (\log m)^{-2})t_L$.

We first recognize that for any fixed pair $i \not= j$, $T^*_{ij}$ can be rewritten as the standardized  sum
\[
T_{ij}^* = \frac{\sum_{k = 1}^{n_i} (X_{ki} - \mu_i) - \sum_{k = 1}^{n_j} n_i n_j^{-1}(X_{kj} - \mu_j)   }{\sqrt{n_i \sigma_i^2 + n_i^2 \sigma_j^2/ n_j}} = \frac{\sum_{k = 1}^{n_i \vee n_j} \eta_{k, ij} }{\sqrt{a_{ij}}},
\]
where
\[
 \eta_{k, ij}:=
  \begin{cases}
\frac{ (X_{ki} - \mu_i) - n_i n_j^{-1} (X_{kj} - \mu_j) }{ \sigma_i}& \text{if } k \leq n_i \wedge n_j \\
   \frac{(X_{ki} - \mu_i) {\bf 1}_{\{n_i > n_j\}} -  n_i n_j^{-1}(X_{kj} - \mu_j) {\bf 1}_{\{n_i < n_j\}} }{\sigma_i}     &\text{if } k >  n_i \wedge n_j  \end{cases}, 1 \leq k \leq n_i \vee n_j,
\]
are independent random variables with mean $0$, and 
\[
a_{ij} := n_i  + \frac{n_i^2 \sigma_j^2}{n_j\sigma_i^2}.
\]

Without loss of generality, we will prove the lemma by assuming that $i_1 = i_2$, and make the identification $i_1 = i_2 = i$, $j_1= j$, $j_2 = l$ with three distinct indices $i, j, l$ from now on. For each $k = 1, \dots,  n_i \vee n_j \vee n_l$, let 
\[
{\boldsymbol \eta}_k = (\eta_{k, i j  } {\bf 1}_{(k \leq n_i \vee n_j)}/\sqrt{a_{ij}} \;\ , \;\ \eta_{k, i l  }{\bf 1}_{(k \leq n_i \vee n_l)}/\sqrt{a_{il}})^T
\] and 
\[
\Sigma = \text{Cov}\Biggl(\sum_{k = 1}^{n_i \vee n_j \vee n_l}  {\boldsymbol \eta}_k{\boldsymbol \eta}_k^T\Biggr),
\] where  the dependence of ${\boldsymbol \eta}_k$ on the particular choice of the triple $(i, j, l)$ is suppressed for brevity; this gives rise to the alternative expression
\[
(T^*_{ij}, T^*_{il})^T = \sum_{k=1}^{n_i \vee n_j \vee n_k} {\boldsymbol \eta}_k
.
\] 
Later we will use the properties that
\begin{equation} \label{mean}
\mathbb{E}[{\boldsymbol \eta}_k] = (0, 0)^T,
\end{equation}
\begin{equation}\label{diagdiag}
\text{ $\Sigma =  \begin{pmatrix}
  \Sigma_{1,1} & \Sigma_{1,2}  \\
  \Sigma_{2,1} & \Sigma_{2,2} \\
 \end{pmatrix}$ is a $2\times 2$ matrix with $1$'s on the diagonal }
 \end{equation}
 and
 \begin{equation} \label{offdiag}
(1 + c_U^2)^{-1} \leq \Sigma_{1, 2}=  \text{Cov}(T^*_{ij}, T^*_{il}) = \sqrt{\left(1 + \frac{\sigma_j^2 n_i}{\sigma_i^2n_j}\right)^{-1}} \sqrt{\left(1 + \frac{\sigma_l^2 n_i}{\sigma_i^2n_l}\right)^{-1}} \leq (1 + c_L^2)^{-1},
\end{equation}
where \eqref{offdiag} follows from \assumpref{balance_homo}.
For $1 \leq k \leq n_i \vee n_j \vee n_l$, by defining the  the truncations
\[
\hat{\boldsymbol \eta}_k = (\hat{\eta}_{k, ij}, \hat{\eta}_{k, il})^T:= {\boldsymbol \eta}_k{\bf 1}_{( \|{\boldsymbol \eta}_k\| \leq (\log m)^{-4})} - \mathbb{E}[{\boldsymbol \eta}_k{\bf 1}_{( \|{\boldsymbol \eta}_k\| \leq (\log m)^{-4})} ]
\] and
\[
\tilde{\boldsymbol \eta}_k = {\boldsymbol \eta}_k - \hat{\boldsymbol \eta}_k = {\boldsymbol \eta}_k {\bf 1}_{( \|{\boldsymbol \eta}_k\|> (\log m)^{-4})} - \mathbb{E}[{\boldsymbol \eta}_k {\bf 1}_{( \|{\boldsymbol \eta}_k\| > (\log m)^{-4})} ],
\]
whose dependence  on $(i, j, l)$ is again suppressed, we get that
\begin{multline}\label{somemline}
P\left(T^*_{i j} \geq \tilde{t}, T^*_{i l} \geq \tilde{t} \right)  \leq P\left(\Biggl\|\sum_{k = 1}^{n_i \vee n_j \vee n_l} \tilde{\boldsymbol \eta}_k \Biggr\| \geq (\log m)^{-2}\right) + \\
P\left(\sum_{k = 1}^{n_i \vee n_j} \hat{\eta}_{k, ij }  \geq \tilde{t}   - (\log m)^{-2}, \sum_{k = 1}^{n_i \vee n_l} \hat{\eta}_{k, il } \geq \tilde{t}  - (\log m)^{-2}\right) .
\end{multline}
By \assumpsref{balance_homo} and \assumpssref{moment_regime}, as well as Jensen's inequality, there exists some constant $\kappa > 0$,  such that 
\begin{equation} \label{kappa_ineq}
\max_{i\neq j\neq l}\max_{1 \leq k \leq n_i \vee n_j \vee n_l} \mathbb{E}[\|{\boldsymbol \eta}_k\|^{4 r + 2 + \nu}] \leq \kappa /  n^{2r + 1+ \nu/2}, 
\end{equation}
which also implies
\begin{align}
&\max_{i\neq j \neq l}\left\|\sum_{k = 1}^{n_i \vee n_j \vee n_l} \mathbb{E}\left[ {\boldsymbol \eta}_k {\bf 1}_{( \|{\boldsymbol \eta}_k\| > (\log m)^{-4}) }\right] \right\| \notag\\
&\leq \max_{i\neq j \neq l} \sum_{k = 1}^{n_i \vee n_j \vee n_l} \mathbb{E} [\|{\boldsymbol \eta}_k\|{\bf 1}_{( \|{\boldsymbol \eta}_k\| > (\log m)^{-4})}] \notag\\
&\leq \max_{i \neq j \neq l} \sum_{k = 1}^{n_i \vee n_j \vee n_l} (\log m)^{-4} \mathbb{E}\left[ \left\|\frac{ {\boldsymbol \eta}_k}{ (\log m)^{-4}}\right\|^{4r + 2 + \nu} \right] \notag\\
&\leq \kappa  \frac{(\log m)^{16 r + 4 + 4 \nu}}{n^{2r + \nu/2}} = o((\log m)^{-2})\label{whatever}
\end{align}
Hence for a sufficiently large  $n$, in order for $\|\sum_{k=1}^{n_i \vee n_j \vee n_l} \tilde{\boldsymbol \eta}_k\|$ to be greater than $(\log m)^{-2}$, from the definition of $\tilde{\boldsymbol \eta}_k$ it can be seen that at least one of ${\boldsymbol \eta}_k {\bf 1}_{( \|{\boldsymbol \eta}_k\| >  (\log m)^{-4})}$, $k = 1, \dots, n_i \vee n_j \vee n_l$, must be non-zero, which gives
\begin{align*}
&\max_{i \neq j\neq l} P\left(\left\|\sum_{k = 1}^{n_i \vee n_j \vee n_l} \tilde{\boldsymbol \eta}_k\right\| > (\log m)^{-2}\right)\\
&\leq \max_{i\neq j\neq l}P\left(\left\|\sum_{k = 1}^{n_i \vee n_j \vee n_l}  {\boldsymbol \eta}_k {\bf 1}_{( \|{\boldsymbol \eta}_k\|> (\log m)^{-4})}\right\| > \frac{(\log m)^{-2}}{2} -  \kappa  \frac{(\log m)^{16 r + 4 + 4 \nu}}{n^{2r + \nu/2}}\right) \\
&\leq \max_{i\neq j \neq l} n P\left(\left\|  {\boldsymbol \eta}_k \right\| > \frac{(\log m)^{-2}}{2} -  \kappa  \frac{(\log m)^{16 r + 4 + 4 \nu}}{n^{2r + \nu/2}}\right)\\
&\leq \kappa n^{-2r - \nu/2} \Bigg/\Biggl( \frac{(\log m)^{-2}}{2} -  \kappa  \frac{(\log m)^{16 r + 4 + 4 \nu}}{n^{2r + \nu/2}} \Biggr)^{4r +2 +\nu}\\
& =  \kappa n^{-2r - \nu/2}  \Bigg/\Biggl( \frac{(\log m)^{-2}}{2} - o\Bigl( (\log m)^{-2}\Bigr) \Biggr)^{4r +2 +\nu} = O(n^{-r -c}).
\end{align*}
 From \eqref{somemline} it remains to show, for some $\delta > 0$,
\begin{multline*}
\max_{i \neq j\neq l}\max_{0 \leq t \leq \sqrt{2\log m}}P\left(\sum_{k = 1}^{n_i \vee n_j} \hat{\eta}_{k, ij }  \geq \tilde{t}  - (\log m)^{-2}, \sum_{k = 1}^{n_i \vee n_l} \hat{\eta}_{k, il } \geq \tilde{t} - (\log m)^{-2}\right)\\
 \leq \frac{C}{(1 +t)^2} \exp \Biggl(- \frac{t^2}{1 + \delta}\Biggr).
\end{multline*}

%

Since $\|\hat{\boldsymbol \eta}_k\|$ are bounded by  $2(\log m)^{-4}$,  by taking  $\tau = 2  (\log m)^{-4}$ for $\tau$ in \lemref{zaitsev} \citep{zaitsev}, we have
\begin{multline*}
P\left(\sum_k \hat{\eta}_{k, ij }  \geq \tilde{t} -  (\log m)^{-2}, \sum_k \hat{\eta}_{k, il }  \geq \tilde{t}  - (\log m)^{-2}\right)\\
 \leq P\left(W_1  >\tilde{t}  - 2(\log m)^{-2}, W_2 >\tilde{t}  - 2(\log m)^{-2}\right)
 + c_1 \exp\left(- \frac{(\log m)^2 }{c_2}\right)
\end{multline*}
for some absolute constants $c_1, c_2 > 0$, where $(W_1, W_2)^T$ is a bivariate Gaussian vector with mean zero and covariance structure $\hat{\Sigma} := \mathbb{E}[\sum_{k = 1}^{n_1 \vee n_j \vee n_l}\hat{\boldsymbol \eta}_k  \hat{\boldsymbol \eta}_k^T]$.  Note that the term $c_1 \exp\left(- \frac{(\log m)^2 }{c_2}\right)$ is less than $ C (1 + t)^{-2} \exp(- t^2/(1 + \delta))$ for some $C, \delta > 0$ on the range $t \in [0, \sqrt{2\log m}]$, and we are left with uniformly bounding the probability $P\left(W_1  >\tilde{t}  - 2(\log m)^{-2}, W_2 >\tilde{t}  - 2(\log m)^{-2}\right)$ in  the preceding display.

By \eqref{mean} and \eqref{whatever}, 
\[
\max_{i \neq j \neq l}\left\|\mathbb{E}\left[\sum_{k = 1}^{n_i \vee n_j \vee n_l} {\boldsymbol \eta}_k {\bf 1}_{( \|{\boldsymbol \eta}_k\| \leq (\log m)^{-4}) }\right] \right\|= o((\log m)^{-2}),
\] hence, by letting $\|\cdot\|_\infty$ denote the matrix max norm,
\begin{align}
\max_{i \neq j \neq l}\|\Sigma - \hat{\Sigma}\|_\infty &\leq \max_{i \neq j \neq l} \left\| \mathbb{E}\left[\sum_{k = 1}^{n_i \vee n_j \vee n_l}{\boldsymbol \eta}_k{\boldsymbol \eta}_k^T{\bf 1}_{( \|{\boldsymbol \eta}_k\| > (\log m)^{-4})}\right] \right\|_\infty + o((\log m)^{-4}) \notag\\
&\leq \max_{i \neq j \neq l} \sum_{k = 1}^{n_i \vee n_j \vee n_l} \mathbb{E}[\|{\boldsymbol\eta}_k\|^2 {\bf 1}_{( \|{\boldsymbol \eta}_k\| > (\log m)^{-4})}]+ o((\log m)^{-4}) \notag\\
&=  (\log m)^{-8} \max_{i \neq j \neq l} \sum_{k = 1}^{n_i \vee n_j \vee n_l} \mathbb{E}\Bigl[\Bigl\|\frac{{\boldsymbol\eta}_k}{(\log m)^{-4}}\Bigr\|^2 {\bf 1}_{( \|{\boldsymbol \eta}_k\| > (\log m)^{-4})}\Bigr]+ o((\log m)^{-4}) \notag\\
&\leq (\log m)^{16r + 4 \nu} \frac{\kappa}{ n^{2r + \nu/2}} +  o((\log m)^{-4}) = o( (\log m)^{-2}),   \label{last2bdd}
\end{align}
where the last inequality follows from \eqref{kappa_ineq}.
By \lemref{Berman} \citep{Berman}, for $0 \leq t \leq  \sqrt{2\log m}$  and large enough $n$,
\begin{multline} \label{lastbdd}
P\left(W_1  > \tilde{t}  - 2(\log m)^{-2}, W_2 >\tilde{t}  - 2(\log m)^{-2}\right) \leq \\
 \frac{C}{\left(1 + \min\left(\frac{\tilde{t}  - 2(\log m)^{-2}}{\sqrt{\hat{\Sigma}}_{11}}, \frac{\tilde{t}   - 2(\log m)^{-2}}{\sqrt{\hat{\Sigma}}_{22}}\right)\right)^2} \exp\left(  - \frac{\min\left(\frac{\tilde{t}   - 2(\log m)^{-2}}{\sqrt{\hat{\Sigma}}_{11}}, \frac{\tilde{t}   - 2(\log m)^{-2}}{\sqrt{\hat{\Sigma}}_{22}}\right)^2}{1 + \frac{ \hat{\Sigma}_{12}}{\sqrt{\hat{\Sigma}_{11}\hat{\Sigma}_{22} }}}\right).
\end{multline}
By the bound in  \eqref{diagdiag}, \eqref{offdiag}, \eqref{last2bdd} and elementary calculations, the right hand side can further be bounded by
\[
\frac{C}{(1+t)^2} \exp(- t^2/(1 + \delta))
\]
for some other constants $C > 0$ and $1 > \delta > 0$, uniformly in $(i,j,l)$ and $0 \leq t \leq \sqrt{2 \log m}$. These elementary calculations are left to the readers, but the fact that $t_L= \bar{\Phi}^{-1} (\bar{\Phi} (t)( 1 + \epsilon_n))$ converges to $t$ uniformly in $t \in [0, \sqrt{2 \log m}]$ is helpful: It is obvious how $t_L \rightarrow t$ uniformly for $t \in [0, 2]$, since $\bar{\Phi}^{-1}$ has bounded derivative on $[ \bar{\Phi}(2), \bar{\Phi}(0)]$. On the interval $t \in [2, \sqrt{2\log m}]$, by inverse function theorem and with mean value theorem,
\[
|t_L - t| \leq |\phi(t)|^{-1}|\Phi(t)|\epsilon_n \leq \frac{\epsilon_n}{t} \rightarrow 0
\]
uniformly in $t$ since $\epsilon_n = O(n^{-c})$.
 \end{proof}

\newpage
\section{Technical lemmas}

\begin{lemma}\label{lem:changzhou}
\citep[Theorem 2.3]{chang2016cramer} Let $\{Y_1, \dots, Y_{n_y}\}$ and  $\{Z_1, \dots, Z_{n_z}\}$ be two independent i.i.d  samples such that $\bE[Y_1] = \bE[Z_1] = 0$ and $\bE[|Y_1|^{2 + \delta}], \bE[|Z_1|^{2 + \delta}]   < \infty$ for some positive number $\delta \in (0, 1]$, and let $\bE[Y_1^2] = \sigma^2_y$ and $\bE[Z_1^2] = \sigma^2_z$. Consider the two sample t-statistic
\[
T = \frac{\bar{Y} - \bar{Z} }{ \sqrt{\hat{\sigma}_y^2/n_y + \hat{\sigma}_z^2/n_z}},
\]
where $\bar{Y}= n_y^{-1} \sum_{1 \leq k \leq n_y} Y_k$, $\bar{Z}= n_z^{-1} \sum_{1 \leq k \leq n_z} Z_k$,
\[
\hat{\sigma}_y^2 = \frac{\sum_{1 \leq k \leq n_y}( Y_k- \bar{Y})^2}{n_y - 1}, \; \hat{\sigma}_z^2 = \frac{\sum_{1 \leq k \leq n_z}( Z_k- \bar{Z})^2}{n_z - 1}.
\]
There exist absolute constants $A, a >0$ such that 
\[
\frac{P(T \geq s)}{\bar{\Phi}(s)} = 1 + O(1) (1 + s)^{2 + \delta} \Biggl( \frac{\bE[|Y_1|^{2 + \delta}]}{\sigma_y^{2 + \delta} n_y^{\delta/2}}+  \frac{\bE[|Z_1|^{2 + \delta}]}{\sigma_z^{2 + \delta} n_z^{\delta/2}}\Biggr),
\]
holds   for $0 \leq s \leq a \min \Bigl(\frac{\sigma_y n_y^{1/2- 1/(2+ \delta)}}{(\bE[|Y_1|^{2 + \delta}])^{1/(2+ \delta)}}, \frac{\sigma_z n_z^{1/2- 1/(2+ \delta)}}{(\bE[|Z_1|^{2 + \delta}])^{1/(2+ \delta)}}\Bigr)$, where $|O(1)| \leq A$.

\end{lemma}

\begin{lemma}\label{lem:Berman}\citep[Lemma 2]{Berman}
If $(Y, Z)$ have a bivariate  normal distribution with $\bE[Y] = \bE[Z] = 0$, $\bE[Y^2] = \bE[Z^2] = 1$ and $\bE[YZ] = r$, then
\[
\lim_{c \rightarrow \infty} \frac{P(Y > c, Z > c)}{ [2 \pi (1 - r)^{1/2} c^2]^{-1} \exp( - \frac{c^2}{1 + r})(1 + r)^{3/2}} = 1
\]
uniformly for all $r$ such that $|r| \leq \delta$, for any $\delta \in (0, 1)$.

\end{lemma}

\begin{lemma}\label{lem:zaitsev}\citep[Theorem 1.1]{zaitsev}
Let $\tau >0$,and $\xi_1, \dots, \xi_n \in \mathbb{R}^p$ be independent mean-zero random vectors such that for all $t, u \in \mathbb{R}^p$,
\[
\bE[(\xi_i^T t)^2 (\xi_i^T u)^{\omega-2} ] \leq \frac{\omega!}{2} \tau^{\omega-2} \|u\|^{\omega-2} \bE[(\xi_i^Tt)^2] \text{ for every } \omega =3,4, \dots, 
\] 
and define $S = \sum_{i=1}^n \xi_i$. Let $B$ be a Borel set in $\mathbb{R}^p$ and, for $\lambda >0$, $B^\lambda$ be its $\lambda$-neighborhood defined by
\[
B^\lambda = \Bigl\{y \in \mathbb{R}^p: \inf_{x \in B}\|x - y\|< \lambda\Bigr\}.
\]
If $\mu_{S}$ and $\mu_{\mathcal{N}_{0, \text{Cov}(S)}}$ are respectively the probability measures of $S$ and  of a $p$-variate normal distribution with mean $0$ and the same covariance structure as $S$, then for all $\lambda \geq 0$,
\begin{multline*}
\sup_B \Biggl\{\Bigl(\mu_{S}(B) - \mu_{\mathcal{N}_{0, \text{Cov}(S)}}(B^\lambda)\Bigr) \vee \Bigl( \mu_{\mathcal{N}_{0, \text{Cov}(S)}}(B)- \mu_{S}(B^\lambda) \Bigr) \Biggr\}
\leq c_1(p) \exp\left( - \frac{\lambda}{c_2(p) \tau}\right),
\end{multline*}
where $c_1(p), c_2(p) >0$ are constants depending on $p$ only, and the supremum is taken over all Borel sets.

\end{lemma}

\section{Additional numerical studies} \label{app:additional_numerical}

We provide extra simulation results for $\alpha = 0.3$, $0.1$, for the setups in \secref{numeric}. 
\begin{table}[t]
\centering
\caption{Estimates of $P(dFDP \leq \frac{\alpha}{2}\left( 1 + q_0/q\right))$ for the BH procedure in \defref{dBH}  at level $\alpha = 0.3$, based on $500$ repetitions  data are generated from the one-way ANOVA model  in \secref{numeric}, 
  for   combinations of $(m, n)$ and  $q_0/q  =  0$.   } 
\begin{tabular}{c|cccccc|cccccc}
  \hline
$ m \backslash n$ & 20 & 40 & 100 & 200 & 400 & 600 & 20 & 40 & 100 & 200 & 400 & 600 \\ 
  \hline
& \multicolumn{6}{c|}{ \emph{effect size} =  0.05} & \multicolumn{6}{c}{ \emph{effect size} = 0.25} \\
   \hline
15 & 0.79 & 0.83 & 0.87 & 0.88 & 0.91 & 0.93 & 0.89 & 0.97 & 1.00 & 1.00 & 1.00 & 1.00 \\ 
  30 & 0.79 & 0.82 & 0.90 & 0.92 & 0.96 & 0.99 & 0.99 & 1.00 & 1.00 & 1.00 & 1.00 & 1.00 \\ 
  50 & 0.73 & 0.82 & 0.89 & 0.95 & 0.98 & 1.00 & 0.99 & 1.00 & 1.00 & 1.00 & 1.00 & 1.00 \\ 
  70 & 0.69 & 0.82 & 0.87 & 0.93 & 0.99 & 1.00 & 0.99 & 1.00 & 1.00 & 1.00 & 1.00 & 1.00 \\ 
  90 & 0.67 & 0.79 & 0.85 & 0.94 & 0.99 & 1.00 & 0.99 & 1.00 & 1.00 & 1.00 & 1.00 & 1.00 \\ 
  120 & 0.64 & 0.77 & 0.88 & 0.95 & 1.00 & 1.00 & 1.00 & 1.00 & 1.00 & 1.00 & 1.00 & 1.00 \\ 
   \hline
\hline
& \multicolumn{6}{c|}{ \emph{effect size} =  0.45} & \multicolumn{6}{c}{ \emph{effect size} = 0.65} \\
   \hline
15 & 0.99 & 1.00 & 1.00 & 1.00 & 1.00 & 1.00 & 1.00 & 1.00 & 1.00 & 1.00 & 1.00 & 1.00 \\ 
  30 & 1.00 & 1.00 & 1.00 & 1.00 & 1.00 & 1.00 & 1.00 & 1.00 & 1.00 & 1.00 & 1.00 & 1.00 \\ 
  50 & 1.00 & 1.00 & 1.00 & 1.00 & 1.00 & 1.00 & 1.00 & 1.00 & 1.00 & 1.00 & 1.00 & 1.00 \\ 
  70 & 1.00 & 1.00 & 1.00 & 1.00 & 1.00 & 1.00 & 1.00 & 1.00 & 1.00 & 1.00 & 1.00 & 1.00 \\ 
  90 & 1.00 & 1.00 & 1.00 & 1.00 & 1.00 & 1.00 & 1.00 & 1.00 & 1.00 & 1.00 & 1.00 & 1.00 \\ 
  120 & 1.00 & 1.00 & 1.00 & 1.00 & 1.00 & 1.00 & 1.00 & 1.00 & 1.00 & 1.00 & 1.00 & 1.00 \\ 
   \hline
\end{tabular}
  \label{tab:FDPtab}
\end{table}
\begin{table}[ht]
\centering
\caption{Estimates of $dFDR$ for the BH procedure in \defref{dBH}  at level $\alpha = 0.3$, based on $500$ repetitions of  data  generated from the one-way ANOVA model  in \secref{numeric}, 
  for   combinations of $(m, n)$ and  $q_0/q = 0$; 
  the dFDR target is  $\frac{\alpha}{2}\left( 1 + q_0/q\right) = 0.15$.   
  } 
\begin{tabular}{c|cccccc|cccccc}
  \hline
$ m \backslash n$& 20 & 40 & 100 & 200 & 400 & 600 & 20 & 40 & 100 & 200 & 400 & 600 \\ 
  \hline
& \multicolumn{6}{c|}{ \emph{effect size} =  0.05} & \multicolumn{6}{c}{ \emph{effect size} = 0.25} \\
   \hline
15 & 0.11 & 0.08 & 0.05 & 0.05 & 0.04 & 0.04 & 0.05 & 0.04 & 0.03 & 0.02 & 0.01 & 0.01 \\ 
  30 & 0.09 & 0.07 & 0.04 & 0.04 & 0.04 & 0.04 & 0.04 & 0.03 & 0.02 & 0.02 & 0.01 & 0.01 \\ 
  50 & 0.12 & 0.07 & 0.05 & 0.04 & 0.04 & 0.04 & 0.05 & 0.04 & 0.02 & 0.02 & 0.01 & 0.01 \\ 
  70 & 0.14 & 0.07 & 0.05 & 0.04 & 0.04 & 0.04 & 0.05 & 0.04 & 0.02 & 0.02 & 0.01 & 0.01 \\ 
  90 & 0.14 & 0.08 & 0.06 & 0.04 & 0.04 & 0.04 & 0.05 & 0.04 & 0.03 & 0.02 & 0.01 & 0.01 \\ 
  120 & 0.14 & 0.08 & 0.05 & 0.04 & 0.04 & 0.04 & 0.05 & 0.04 & 0.03 & 0.02 & 0.01 & 0.01 \\ 
   \hline
\hline
& \multicolumn{6}{c|}{ \emph{effect size} =  0.45} & \multicolumn{6}{c}{ \emph{effect size} = 0.65} \\
   \hline
15 & 0.03 & 0.02 & 0.02 & 0.01 & 0.01 & 0.01 & 0.03 & 0.02 & 0.01 & 0.01 & 0.01 & 0.01 \\ 
  30 & 0.03 & 0.02 & 0.01 & 0.01 & 0.01 & 0.01 & 0.02 & 0.01 & 0.01 & 0.01 & 0.00 & 0.00 \\ 
  50 & 0.03 & 0.02 & 0.01 & 0.01 & 0.01 & 0.01 & 0.02 & 0.02 & 0.01 & 0.01 & 0.01 & 0.00 \\ 
  70 & 0.03 & 0.02 & 0.01 & 0.01 & 0.01 & 0.01 & 0.02 & 0.02 & 0.01 & 0.01 & 0.01 & 0.00 \\ 
  90 & 0.03 & 0.02 & 0.02 & 0.01 & 0.01 & 0.01 & 0.02 & 0.02 & 0.01 & 0.01 & 0.01 & 0.00 \\ 
  120 & 0.03 & 0.02 & 0.02 & 0.01 & 0.01 & 0.01 & 0.02 & 0.02 & 0.01 & 0.01 & 0.01 & 0.00 \\ 
   \hline
\end{tabular}   \label{tab:FDRtab}
\end{table}

\begin{table}[ht]
\centering
\caption{Estimates of $P(dFDP \leq \frac{\alpha}{2}\left( 1 + q_0/q\right))$ for the BH procedure in \defref{dBH}  at level $\alpha = 0.3$, based on $500$ repetitions  data are generated from the one-way ANOVA model  in \secref{numeric}, 
  for   combinations of $(m, n)$ and  $q_0/q  \approx 0.25$.     } 
\begin{tabular}{c|cccccc|cccccc}
  \hline
$ m \backslash n$ & 20 & 40 & 100 & 200 & 400 & 600 & 20 & 40 & 100 & 200 & 400 & 600 \\ 
  \hline
& \multicolumn{6}{c|}{ \emph{effect size} =  0.05} & \multicolumn{6}{c}{ \emph{effect size} = 0.25} \\
   \hline
15 & 0.77 & 0.82 & 0.83 & 0.85 & 0.81 & 0.81 & 0.76 & 0.81 & 0.90 & 0.94 & 0.98 & 0.99 \\ 
  30 & 0.77 & 0.79 & 0.83 & 0.81 & 0.79 & 0.83 & 0.80 & 0.88 & 0.98 & 1.00 & 1.00 & 1.00 \\ 
  50 & 0.68 & 0.77 & 0.80 & 0.81 & 0.79 & 0.82 & 0.74 & 0.86 & 0.98 & 1.00 & 1.00 & 1.00 \\ 
  70 & 0.66 & 0.76 & 0.80 & 0.82 & 0.79 & 0.81 & 0.70 & 0.85 & 0.98 & 0.99 & 1.00 & 1.00 \\ 
  90 & 0.62 & 0.75 & 0.78 & 0.79 & 0.80 & 0.84 & 0.70 & 0.85 & 0.98 & 0.99 & 1.00 & 1.00 \\ 
  120 & 0.59 & 0.73 & 0.79 & 0.80 & 0.80 & 0.89 & 0.77 & 0.93 & 1.00 & 1.00 & 1.00 & 1.00 \\ 
   \hline
\hline
& \multicolumn{6}{c|}{ \emph{effect size} =  0.45} & \multicolumn{6}{c}{ \emph{effect size} = 0.65} \\
   \hline
15 & 0.79 & 0.88 & 0.97 & 0.99 & 1.00 & 1.00 & 0.90 & 0.95 & 0.99 & 1.00 & 1.00 & 1.00 \\ 
  30 & 0.95 & 0.99 & 1.00 & 1.00 & 1.00 & 1.00 & 0.99 & 1.00 & 1.00 & 1.00 & 1.00 & 1.00 \\ 
  50 & 0.93 & 0.98 & 1.00 & 1.00 & 1.00 & 1.00 & 0.99 & 1.00 & 1.00 & 1.00 & 1.00 & 1.00 \\ 
  70 & 0.91 & 0.99 & 1.00 & 1.00 & 1.00 & 1.00 & 0.98 & 1.00 & 1.00 & 1.00 & 1.00 & 1.00 \\ 
  90 & 0.93 & 0.99 & 1.00 & 1.00 & 1.00 & 1.00 & 0.99 & 1.00 & 1.00 & 1.00 & 1.00 & 1.00 \\ 
  120 & 0.97 & 1.00 & 1.00 & 1.00 & 1.00 & 1.00 & 0.99 & 1.00 & 1.00 & 1.00 & 1.00 & 1.00 \\ 
   \hline
\end{tabular}
\end{table}

\begin{table}[ht]
\centering
\caption{Estimates of $dFDR$ for the BH procedure in \defref{dBH}  at level $\alpha = 0.3$, based on $500$ repetitions of  data  generated from the one-way ANOVA model  in \secref{numeric}, 
  for   combinations of $(m, n)$ and  $q_0/q \approx 0.25$; 
  the dFDR target is  $\frac{\alpha}{2}\left( 1 + q_0/q\right) \approx 0.1875$.  } 
\begin{tabular}{c|cccccc|cccccc}
  \hline
$ m \backslash n$ & 20 & 40 & 100 & 200 & 400 & 600 & 20 & 40 & 100 & 200 & 400 & 600 \\ 
  \hline
& \multicolumn{6}{c|}{ \emph{effect size} =  0.05} & \multicolumn{6}{c}{ \emph{effect size} = 0.25} \\ 
   \hline
15 & 0.15 & 0.11 & 0.09 & 0.08 & 0.08 & 0.08 & 0.11 & 0.08 & 0.07 & 0.07 & 0.07 & 0.07 \\ 
  30 & 0.13 & 0.11 & 0.08 & 0.08 & 0.09 & 0.09 & 0.10 & 0.09 & 0.09 & 0.08 & 0.08 & 0.08 \\ 
  50 & 0.19 & 0.13 & 0.10 & 0.09 & 0.10 & 0.09 & 0.12 & 0.10 & 0.09 & 0.09 & 0.08 & 0.08 \\ 
  70 & 0.20 & 0.13 & 0.10 & 0.09 & 0.10 & 0.10 & 0.13 & 0.12 & 0.10 & 0.09 & 0.09 & 0.09 \\ 
  90 & 0.22 & 0.15 & 0.11 & 0.09 & 0.09 & 0.09 & 0.13 & 0.11 & 0.10 & 0.09 & 0.09 & 0.09 \\ 
  120 & 0.23 & 0.15 & 0.10 & 0.08 & 0.10 & 0.10 & 0.14 & 0.11 & 0.10 & 0.09 & 0.09 & 0.08 \\ 
   \hline
\hline
& \multicolumn{6}{c|}{ \emph{effect size} =  0.45} & \multicolumn{6}{c}{ \emph{effect size} = 0.65} \\
   \hline
15 & 0.09 & 0.08 & 0.07 & 0.07 & 0.06 & 0.06 & 0.08 & 0.07 & 0.06 & 0.06 & 0.06 & 0.06 \\ 
  30 & 0.09 & 0.09 & 0.08 & 0.07 & 0.08 & 0.07 & 0.09 & 0.08 & 0.08 & 0.07 & 0.07 & 0.07 \\ 
  50 & 0.10 & 0.09 & 0.08 & 0.08 & 0.08 & 0.08 & 0.10 & 0.09 & 0.08 & 0.08 & 0.08 & 0.08 \\ 
  70 & 0.11 & 0.10 & 0.09 & 0.09 & 0.08 & 0.08 & 0.11 & 0.10 & 0.08 & 0.08 & 0.08 & 0.08 \\ 
  90 & 0.12 & 0.10 & 0.09 & 0.09 & 0.08 & 0.08 & 0.10 & 0.09 & 0.08 & 0.08 & 0.08 & 0.08 \\ 
  120 & 0.11 & 0.10 & 0.09 & 0.08 & 0.08 & 0.08 & 0.10 & 0.09 & 0.08 & 0.08 & 0.08 & 0.08 \\ 
   \hline
\end{tabular}
\end{table}

\begin{table}[ht]
\centering
\caption{Estimates of $P(dFDP \leq \frac{\alpha}{2}\left( 1 + q_0/q\right))$ for the BH procedure in \defref{dBH}  at level $\alpha = 0.3$, based on $500$ repetitions  data are generated from the one-way ANOVA model  in \secref{numeric}, 
  for   combinations of $(m, n)$ and  $q_0/q  \approx  0.5$.   } 
\begin{tabular}{c|cccccc|cccccc}
  \hline
$ m \backslash n$ & 20 & 40 & 100 & 200 & 400 & 600 & 20 & 40 & 100 & 200 & 400 & 600 \\ 
  \hline
& \multicolumn{6}{c|}{ \emph{effect size} =  0.05} & \multicolumn{6}{c}{ \emph{effect size} = 0.25} \\
   \hline
15 & 0.74 & 0.78 & 0.82 & 0.79 & 0.78 & 0.80 & 0.72 & 0.74 & 0.73 & 0.73 & 0.79 & 0.77 \\ 
  30 & 0.75 & 0.78 & 0.82 & 0.81 & 0.73 & 0.73 & 0.71 & 0.69 & 0.78 & 0.83 & 0.86 & 0.85 \\ 
  50 & 0.67 & 0.75 & 0.80 & 0.77 & 0.73 & 0.74 & 0.62 & 0.67 & 0.76 & 0.82 & 0.86 & 0.88 \\ 
  70 & 0.64 & 0.75 & 0.80 & 0.81 & 0.74 & 0.72 & 0.55 & 0.64 & 0.74 & 0.83 & 0.87 & 0.89 \\ 
  90 & 0.59 & 0.74 & 0.74 & 0.77 & 0.77 & 0.72 & 0.53 & 0.67 & 0.80 & 0.89 & 0.95 & 0.95 \\ 
  120 & 0.58 & 0.73 & 0.78 & 0.80 & 0.74 & 0.75 & 0.52 & 0.67 & 0.87 & 0.92 & 0.97 & 0.97 \\ 
   \hline
\hline
& \multicolumn{6}{c|}{ \emph{effect size} =  0.45} & \multicolumn{6}{c}{ \emph{effect size} = 0.65} \\
   \hline
15 & 0.67 & 0.69 & 0.75 & 0.78 & 0.85 & 0.84 & 0.68 & 0.71 & 0.78 & 0.83 & 0.86 & 0.87 \\ 
  30 & 0.70 & 0.77 & 0.85 & 0.89 & 0.90 & 0.88 & 0.76 & 0.81 & 0.88 & 0.91 & 0.90 & 0.89 \\ 
  50 & 0.65 & 0.73 & 0.85 & 0.87 & 0.92 & 0.91 & 0.76 & 0.80 & 0.90 & 0.90 & 0.93 & 0.94 \\ 
  70 & 0.64 & 0.76 & 0.83 & 0.90 & 0.92 & 0.93 & 0.69 & 0.83 & 0.87 & 0.93 & 0.94 & 0.94 \\ 
  90 & 0.67 & 0.80 & 0.91 & 0.96 & 0.97 & 0.98 & 0.78 & 0.88 & 0.95 & 0.97 & 0.98 & 0.99 \\ 
  120 & 0.72 & 0.88 & 0.97 & 0.98 & 0.99 & 0.99 & 0.84 & 0.94 & 0.98 & 0.99 & 1.00 & 0.99 \\ 
   \hline
\end{tabular}
\end{table}
\begin{table}[t!]
\centering
\caption{Estimates of $dFDR$ for the BH procedure in \defref{dBH}  at level $\alpha = 0.3$, based on $500$ repetitions of  data  generated from the one-way ANOVA model  in \secref{numeric}, 
  for   combinations of $(m, n)$ and  $q_0/q \approx 0.5$; 
  the dFDR target is  $\frac{\alpha}{2}\left( 1 + q_0/q\right) \approx 0.225$.  } 
\begin{tabular}{c|cccccc|cccccc}
  \hline
$ m \backslash n$ & 20 & 40 & 100 & 200 & 400 & 600 & 20 & 40 & 100 & 200 & 400 & 600 \\ 
  \hline
& \multicolumn{6}{c|}{ \emph{effect size} =  0.05} & \multicolumn{6}{c}{ \emph{effect size} = 0.25} \\
   \hline
15 & 0.19 & 0.16 & 0.13 & 0.14 & 0.13 & 0.12 & 0.16 & 0.13 & 0.13 & 0.13 & 0.13 & 0.13 \\ 
  30 & 0.18 & 0.16 & 0.12 & 0.12 & 0.15 & 0.14 & 0.15 & 0.15 & 0.15 & 0.15 & 0.15 & 0.15 \\ 
  50 & 0.24 & 0.18 & 0.14 & 0.14 & 0.14 & 0.13 & 0.19 & 0.17 & 0.16 & 0.16 & 0.15 & 0.15 \\ 
  70 & 0.27 & 0.18 & 0.14 & 0.11 & 0.13 & 0.14 & 0.22 & 0.18 & 0.16 & 0.16 & 0.16 & 0.16 \\ 
  90 & 0.29 & 0.19 & 0.17 & 0.12 & 0.12 & 0.13 & 0.21 & 0.18 & 0.16 & 0.16 & 0.15 & 0.15 \\ 
  120 & 0.29 & 0.19 & 0.14 & 0.11 & 0.13 & 0.13 & 0.21 & 0.18 & 0.16 & 0.16 & 0.15 & 0.15 \\ 
   \hline
\hline
& \multicolumn{6}{c|}{ \emph{effect size} =  0.45} & \multicolumn{6}{c}{ \emph{effect size} = 0.65} \\
   \hline
15 & 0.15 & 0.14 & 0.13 & 0.13 & 0.13 & 0.13 & 0.15 & 0.14 & 0.13 & 0.13 & 0.12 & 0.12 \\ 
  30 & 0.16 & 0.16 & 0.15 & 0.14 & 0.15 & 0.14 & 0.16 & 0.15 & 0.15 & 0.14 & 0.15 & 0.14 \\ 
  50 & 0.18 & 0.17 & 0.15 & 0.15 & 0.15 & 0.15 & 0.17 & 0.16 & 0.15 & 0.15 & 0.15 & 0.15 \\ 
  70 & 0.19 & 0.17 & 0.16 & 0.16 & 0.16 & 0.15 & 0.18 & 0.17 & 0.15 & 0.15 & 0.16 & 0.15 \\ 
  90 & 0.19 & 0.17 & 0.16 & 0.15 & 0.15 & 0.15 & 0.18 & 0.16 & 0.15 & 0.15 & 0.15 & 0.15 \\ 
  120 & 0.19 & 0.16 & 0.15 & 0.15 & 0.15 & 0.15 & 0.17 & 0.16 & 0.15 & 0.15 & 0.15 & 0.15 \\ 
   \hline
\end{tabular}
\end{table}

\begin{table}[t]
\centering
\caption{Estimates of $P(dFDP \leq \frac{\alpha}{2}\left( 1 + q_0/q\right))$ for the BH procedure in \defref{dBH}  at level $\alpha = 0.1$, based on $500$ repetitions  data are generated from the one-way ANOVA model  in \secref{numeric}, 
  for   combinations of $(m, n)$ and  $q_0/q  =  0$.   } 
\begin{tabular}{c|cccccc|cccccc}
  \hline
$ m \backslash n$ & 20 & 40 & 100 & 200 & 400 & 600 & 20 & 40 & 100 & 200 & 400 & 600 \\ 
  \hline
& \multicolumn{6}{c|}{ \emph{effect size} =  0.05} & \multicolumn{6}{c}{ \emph{effect size} = 0.25} \\
   \hline
15 & 0.92 & 0.94 & 0.96 & 0.94 & 0.92 & 0.92 & 0.91 & 0.90 & 0.96 & 0.99 & 1.00 & 1.00 \\ 
  30 & 0.91 & 0.93 & 0.95 & 0.94 & 0.92 & 0.95 & 0.91 & 0.98 & 1.00 & 1.00 & 1.00 & 1.00 \\ 
  50 & 0.87 & 0.94 & 0.93 & 0.92 & 0.94 & 0.95 & 0.88 & 0.98 & 1.00 & 1.00 & 1.00 & 1.00 \\ 
  70 & 0.83 & 0.92 & 0.93 & 0.94 & 0.94 & 0.98 & 0.90 & 0.99 & 1.00 & 1.00 & 1.00 & 1.00 \\ 
  90 & 0.83 & 0.91 & 0.92 & 0.91 & 0.96 & 0.97 & 0.89 & 1.00 & 1.00 & 1.00 & 1.00 & 1.00 \\ 
  120 & 0.79 & 0.90 & 0.92 & 0.91 & 0.95 & 0.98 & 0.93 & 1.00 & 1.00 & 1.00 & 1.00 & 1.00 \\ 
   \hline
\hline
& \multicolumn{6}{c|}{ \emph{effect size} =  0.45} & \multicolumn{6}{c}{ \emph{effect size} = 0.65} \\
   \hline
15 & 0.92 & 0.98 & 1.00 & 1.00 & 1.00 & 1.00 & 0.98 & 1.00 & 1.00 & 1.00 & 1.00 & 1.00 \\ 
  30 & 1.00 & 1.00 & 1.00 & 1.00 & 1.00 & 1.00 & 1.00 & 1.00 & 1.00 & 1.00 & 1.00 & 1.00 \\ 
  50 & 1.00 & 1.00 & 1.00 & 1.00 & 1.00 & 1.00 & 1.00 & 1.00 & 1.00 & 1.00 & 1.00 & 1.00 \\ 
  70 & 1.00 & 1.00 & 1.00 & 1.00 & 1.00 & 1.00 & 1.00 & 1.00 & 1.00 & 1.00 & 1.00 & 1.00 \\ 
  90 & 1.00 & 1.00 & 1.00 & 1.00 & 1.00 & 1.00 & 1.00 & 1.00 & 1.00 & 1.00 & 1.00 & 1.00 \\ 
  120 & 1.00 & 1.00 & 1.00 & 1.00 & 1.00 & 1.00 & 1.00 & 1.00 & 1.00 & 1.00 & 1.00 & 1.00 \\ 
   \hline
\end{tabular}
  \label{tab:FDPtab}
\end{table}
\begin{table}[ht]
\centering
\caption{Estimates of $dFDR$ for the BH procedure in \defref{dBH}  at level $\alpha = 0.1$, based on $500$ repetitions of  data  generated from the one-way ANOVA model  in \secref{numeric}, 
  for   combinations of $(m, n)$ and  $q_0/q = 0$; 
  the dFDR target is  $\frac{\alpha}{2}\left( 1 + q_0/q\right) = 0.05$.   
  } 
\begin{tabular}{c|cccccc|cccccc}
  \hline
$ m \backslash n$& 20 & 40 & 100 & 200 & 400 & 600 & 20 & 40 & 100 & 200 & 400 & 600 \\ 
  \hline
& \multicolumn{6}{c|}{ \emph{effect size} =  0.05} & \multicolumn{6}{c}{ \emph{effect size} = 0.25} \\
   \hline
15 & 0.04 & 0.03 & 0.02 & 0.01 & 0.02 & 0.01 & 0.02 & 0.01 & 0.01 & 0.01 & 0.00 & 0.00 \\ 
  30 & 0.04 & 0.03 & 0.01 & 0.01 & 0.01 & 0.01 & 0.01 & 0.01 & 0.01 & 0.00 & 0.00 & 0.00 \\ 
  50 & 0.06 & 0.02 & 0.01 & 0.01 & 0.01 & 0.01 & 0.02 & 0.01 & 0.01 & 0.00 & 0.00 & 0.00 \\ 
  70 & 0.08 & 0.03 & 0.02 & 0.01 & 0.01 & 0.01 & 0.02 & 0.01 & 0.01 & 0.00 & 0.00 & 0.00 \\ 
  90 & 0.07 & 0.03 & 0.02 & 0.01 & 0.01 & 0.01 & 0.02 & 0.01 & 0.01 & 0.01 & 0.00 & 0.00 \\ 
  120 & 0.08 & 0.04 & 0.02 & 0.01 & 0.01 & 0.01 & 0.02 & 0.01 & 0.01 & 0.01 & 0.00 & 0.00 \\ 
   \hline
\hline
& \multicolumn{6}{c|}{ \emph{effect size} =  0.45} & \multicolumn{6}{c}{ \emph{effect size} = 0.65} \\
   \hline
15 & 0.01 & 0.01 & 0.00 & 0.00 & 0.00 & 0.00 & 0.01 & 0.00 & 0.00 & 0.00 & 0.00 & 0.00 \\ 
  30 & 0.01 & 0.01 & 0.00 & 0.00 & 0.00 & 0.00 & 0.01 & 0.00 & 0.00 & 0.00 & 0.00 & 0.00 \\ 
  50 & 0.01 & 0.01 & 0.00 & 0.00 & 0.00 & 0.00 & 0.01 & 0.00 & 0.00 & 0.00 & 0.00 & 0.00 \\ 
  70 & 0.01 & 0.01 & 0.00 & 0.00 & 0.00 & 0.00 & 0.01 & 0.00 & 0.00 & 0.00 & 0.00 & 0.00 \\ 
  90 & 0.01 & 0.01 & 0.00 & 0.00 & 0.00 & 0.00 & 0.01 & 0.00 & 0.00 & 0.00 & 0.00 & 0.00 \\ 
  120 & 0.01 & 0.01 & 0.00 & 0.00 & 0.00 & 0.00 & 0.01 & 0.00 & 0.00 & 0.00 & 0.00 & 0.00 \\ 
   \hline
\end{tabular}
   \label{tab:FDRtab}
\end{table}

\begin{table}[ht]
\centering
\caption{Estimates of $P(dFDP \leq \frac{\alpha}{2}\left( 1 + q_0/q\right))$ for the BH procedure in \defref{dBH}  at level $\alpha = 0.1$, based on $500$ repetitions  data are generated from the one-way ANOVA model  in \secref{numeric}, 
  for   combinations of $(m, n)$ and  $q_0/q  \approx 0.25$.     } 
\begin{tabular}{c|cccccc|cccccc}
  \hline
$ m \backslash n$ & 20 & 40 & 100 & 200 & 400 & 600 & 20 & 40 & 100 & 200 & 400 & 600 \\ 
  \hline
& \multicolumn{6}{c|}{ \emph{effect size} =  0.05} & \multicolumn{6}{c}{ \emph{effect size} = 0.25} \\
   \hline
15 & 0.92 & 0.93 & 0.94 & 0.96 & 0.92 & 0.93 & 0.89 & 0.88 & 0.82 & 0.85 & 0.86 & 0.89 \\ 
  30 & 0.89 & 0.91 & 0.94 & 0.93 & 0.86 & 0.85 & 0.82 & 0.79 & 0.89 & 0.92 & 0.92 & 0.93 \\ 
  50 & 0.86 & 0.91 & 0.94 & 0.91 & 0.90 & 0.87 & 0.78 & 0.77 & 0.87 & 0.92 & 0.94 & 0.95 \\ 
  70 & 0.81 & 0.89 & 0.91 & 0.92 & 0.86 & 0.83 & 0.69 & 0.73 & 0.85 & 0.93 & 0.96 & 0.96 \\ 
  90 & 0.80 & 0.89 & 0.91 & 0.92 & 0.89 & 0.86 & 0.69 & 0.75 & 0.87 & 0.94 & 0.97 & 0.99 \\ 
  120 & 0.78 & 0.89 & 0.93 & 0.92 & 0.85 & 0.85 & 0.65 & 0.77 & 0.92 & 0.98 & 0.99 & 0.99 \\ 
   \hline
\hline
& \multicolumn{6}{c|}{ \emph{effect size} =  0.45} & \multicolumn{6}{c}{ \emph{effect size} = 0.65} \\
   \hline
15 & 0.82 & 0.81 & 0.86 & 0.88 & 0.89 & 0.91 & 0.79 & 0.85 & 0.87 & 0.93 & 0.89 & 0.90 \\ 
  30 & 0.80 & 0.86 & 0.94 & 0.96 & 0.94 & 0.95 & 0.86 & 0.90 & 0.97 & 0.96 & 0.95 & 0.96 \\ 
  50 & 0.76 & 0.84 & 0.94 & 0.95 & 0.98 & 0.97 & 0.84 & 0.91 & 0.96 & 0.97 & 0.99 & 0.98 \\ 
  70 & 0.73 & 0.86 & 0.93 & 0.97 & 0.99 & 0.99 & 0.82 & 0.92 & 0.96 & 0.98 & 0.99 & 0.99 \\ 
  90 & 0.73 & 0.86 & 0.96 & 0.98 & 0.99 & 0.99 & 0.85 & 0.93 & 0.98 & 0.99 & 1.00 & 1.00 \\ 
  120 & 0.80 & 0.93 & 0.99 & 1.00 & 1.00 & 1.00 & 0.92 & 0.98 & 1.00 & 1.00 & 1.00 & 1.00 \\ 
   \hline
\end{tabular}
\end{table}

\begin{table}[ht]
\centering
\caption{Estimates of $dFDR$ for the BH procedure in \defref{dBH}  at level $\alpha = 0.1$, based on $500$ repetitions of  data  generated from the one-way ANOVA model  in \secref{numeric}, 
  for   combinations of $(m, n)$ and  $q_0/q \approx 0.25$; 
  the dFDR target is  $\frac{\alpha}{2}\left( 1 + q_0/q\right) \approx 0.0625$.  } 
\begin{tabular}{c|cccccc|cccccc}
  \hline
$ m \backslash n$ & 20 & 40 & 100 & 200 & 400 & 600 & 20 & 40 & 100 & 200 & 400 & 600 \\ 
  \hline
& \multicolumn{6}{c|}{ \emph{effect size} =  0.05} & \multicolumn{6}{c}{ \emph{effect size} = 0.25} \\
   \hline
15 & 0.06 & 0.04 & 0.04 & 0.02 & 0.03 & 0.02 & 0.04 & 0.02 & 0.03 & 0.02 & 0.02 & 0.02 \\ 
  30 & 0.06 & 0.05 & 0.03 & 0.02 & 0.03 & 0.03 & 0.03 & 0.03 & 0.03 & 0.03 & 0.03 & 0.03 \\ 
  50 & 0.09 & 0.05 & 0.03 & 0.03 & 0.02 & 0.03 & 0.05 & 0.04 & 0.03 & 0.03 & 0.03 & 0.03 \\ 
  70 & 0.12 & 0.06 & 0.04 & 0.03 & 0.03 & 0.03 & 0.06 & 0.04 & 0.03 & 0.03 & 0.03 & 0.03 \\ 
  90 & 0.12 & 0.07 & 0.04 & 0.03 & 0.02 & 0.03 & 0.06 & 0.04 & 0.03 & 0.03 & 0.03 & 0.03 \\ 
  120 & 0.12 & 0.06 & 0.03 & 0.03 & 0.03 & 0.03 & 0.06 & 0.04 & 0.03 & 0.03 & 0.03 & 0.03 \\ 
   \hline
\hline
& \multicolumn{6}{c|}{ \emph{effect size} =  0.45} & \multicolumn{6}{c}{ \emph{effect size} = 0.65} \\
   \hline
15 & 0.03 & 0.03 & 0.02 & 0.02 & 0.02 & 0.02 & 0.03 & 0.03 & 0.02 & 0.02 & 0.02 & 0.02 \\ 
  30 & 0.03 & 0.03 & 0.03 & 0.02 & 0.03 & 0.02 & 0.03 & 0.03 & 0.03 & 0.02 & 0.03 & 0.02 \\ 
  50 & 0.04 & 0.03 & 0.03 & 0.03 & 0.03 & 0.03 & 0.03 & 0.03 & 0.03 & 0.03 & 0.03 & 0.02 \\ 
  70 & 0.04 & 0.03 & 0.03 & 0.03 & 0.03 & 0.03 & 0.04 & 0.03 & 0.03 & 0.03 & 0.03 & 0.03 \\ 
  90 & 0.05 & 0.04 & 0.03 & 0.03 & 0.03 & 0.03 & 0.04 & 0.03 & 0.03 & 0.03 & 0.03 & 0.03 \\ 
  120 & 0.04 & 0.03 & 0.03 & 0.03 & 0.03 & 0.03 & 0.04 & 0.03 & 0.03 & 0.03 & 0.03 & 0.03 \\ 
   \hline
\end{tabular}
\end{table}

\begin{table}[ht]
\centering
\caption{Estimates of $P(dFDP \leq \frac{\alpha}{2}\left( 1 + q_0/q\right))$ for the BH procedure in \defref{dBH}  at level $\alpha = 0.1$, based on $500$ repetitions  data are generated from the one-way ANOVA model  in \secref{numeric}, 
  for   combinations of $(m, n)$ and  $q_0/q  \approx  0.5$.   } 
\begin{tabular}{c|cccccc|cccccc}
  \hline
$ m \backslash n$ & 20 & 40 & 100 & 200 & 400 & 600 & 20 & 40 & 100 & 200 & 400 & 600 \\ 
  \hline
& \multicolumn{6}{c|}{ \emph{effect size} =  0.05} & \multicolumn{6}{c}{ \emph{effect size} = 0.25} \\
   \hline
15 & 0.90 & 0.91 & 0.94 & 0.94 & 0.93 & 0.93 & 0.89 & 0.89 & 0.85 & 0.77 & 0.77 & 0.74 \\ 
  30 & 0.88 & 0.91 & 0.94 & 0.92 & 0.89 & 0.86 & 0.83 & 0.78 & 0.74 & 0.77 & 0.72 & 0.75 \\ 
  50 & 0.85 & 0.90 & 0.93 & 0.90 & 0.90 & 0.87 & 0.76 & 0.75 & 0.72 & 0.72 & 0.72 & 0.76 \\ 
  70 & 0.79 & 0.89 & 0.91 & 0.93 & 0.86 & 0.84 & 0.68 & 0.69 & 0.71 & 0.74 & 0.74 & 0.77 \\ 
  90 & 0.78 & 0.88 & 0.89 & 0.93 & 0.88 & 0.83 & 0.64 & 0.66 & 0.68 & 0.79 & 0.82 & 0.85 \\ 
  120 & 0.76 & 0.88 & 0.93 & 0.91 & 0.88 & 0.86 & 0.62 & 0.66 & 0.73 & 0.81 & 0.85 & 0.86 \\ 
   \hline
\hline
& \multicolumn{6}{c|}{ \emph{effect size} =  0.45} & \multicolumn{6}{c}{ \emph{effect size} = 0.65} \\
   \hline
15 & 0.84 & 0.79 & 0.77 & 0.75 & 0.76 & 0.73 & 0.79 & 0.73 & 0.76 & 0.76 & 0.75 & 0.73 \\ 
  30 & 0.69 & 0.68 & 0.75 & 0.78 & 0.75 & 0.77 & 0.67 & 0.70 & 0.77 & 0.78 & 0.76 & 0.78 \\ 
  50 & 0.60 & 0.68 & 0.75 & 0.76 & 0.77 & 0.80 & 0.62 & 0.70 & 0.76 & 0.79 & 0.79 & 0.80 \\ 
  70 & 0.54 & 0.65 & 0.73 & 0.78 & 0.78 & 0.80 & 0.60 & 0.70 & 0.76 & 0.82 & 0.80 & 0.82 \\ 
  90 & 0.55 & 0.67 & 0.75 & 0.83 & 0.85 & 0.88 & 0.61 & 0.72 & 0.79 & 0.85 & 0.85 & 0.88 \\ 
  120 & 0.54 & 0.70 & 0.82 & 0.87 & 0.88 & 0.90 & 0.64 & 0.76 & 0.85 & 0.88 & 0.89 & 0.91 \\ 
   \hline
\end{tabular}
\end{table}
\begin{table}[t!]
\centering
\caption{Estimates of $dFDR$ for the BH procedure in \defref{dBH}  at level $\alpha = 0.1$, based on $500$ repetitions of  data  generated from the one-way ANOVA model  in \secref{numeric}, 
  for   combinations of $(m, n)$ and  $q_0/q \approx 0.5$; 
  the dFDR target is  $\frac{\alpha}{2}\left( 1 + q_0/q\right) \approx 0.075$.  } 
\begin{tabular}{c|cccccc|cccccc}
  \hline
$ m \backslash n$ & 20 & 40 & 100 & 200 & 400 & 600 & 20 & 40 & 100 & 200 & 400 & 600 \\ 
  \hline
& \multicolumn{6}{c|}{ \emph{effect size} =  0.05} & \multicolumn{6}{c}{ \emph{effect size} = 0.25} \\
   \hline
15 & 0.08 & 0.07 & 0.04 & 0.04 & 0.05 & 0.04 & 0.06 & 0.05 & 0.04 & 0.04 & 0.04 & 0.04 \\ 
  30 & 0.08 & 0.07 & 0.04 & 0.05 & 0.05 & 0.05 & 0.07 & 0.05 & 0.05 & 0.05 & 0.05 & 0.05 \\ 
  50 & 0.11 & 0.08 & 0.05 & 0.06 & 0.04 & 0.04 & 0.08 & 0.06 & 0.05 & 0.05 & 0.05 & 0.05 \\ 
  70 & 0.16 & 0.09 & 0.06 & 0.04 & 0.05 & 0.05 & 0.11 & 0.07 & 0.05 & 0.05 & 0.06 & 0.05 \\ 
  90 & 0.17 & 0.09 & 0.07 & 0.04 & 0.03 & 0.04 & 0.11 & 0.07 & 0.06 & 0.05 & 0.05 & 0.05 \\ 
  120 & 0.16 & 0.08 & 0.05 & 0.04 & 0.04 & 0.04 & 0.10 & 0.07 & 0.06 & 0.05 & 0.05 & 0.05 \\ 
   \hline
\hline
& \multicolumn{6}{c|}{ \emph{effect size} =  0.45} & \multicolumn{6}{c}{ \emph{effect size} = 0.65} \\
   \hline
15 & 0.06 & 0.05 & 0.04 & 0.04 & 0.04 & 0.04 & 0.05 & 0.05 & 0.04 & 0.04 & 0.04 & 0.04 \\ 
  30 & 0.06 & 0.05 & 0.05 & 0.05 & 0.05 & 0.05 & 0.06 & 0.05 & 0.05 & 0.05 & 0.05 & 0.05 \\ 
  50 & 0.08 & 0.06 & 0.05 & 0.05 & 0.05 & 0.05 & 0.07 & 0.06 & 0.05 & 0.05 & 0.05 & 0.05 \\ 
  70 & 0.08 & 0.06 & 0.05 & 0.05 & 0.05 & 0.05 & 0.07 & 0.06 & 0.05 & 0.05 & 0.05 & 0.05 \\ 
  90 & 0.08 & 0.06 & 0.05 & 0.05 & 0.05 & 0.05 & 0.07 & 0.06 & 0.05 & 0.05 & 0.05 & 0.05 \\ 
  120 & 0.08 & 0.06 & 0.05 & 0.05 & 0.05 & 0.05 & 0.07 & 0.06 & 0.05 & 0.05 & 0.05 & 0.05 \\ 
   \hline
\end{tabular}
\
\end{table}

  \newpage
  \ \
  \newpage
  \ \
  \newpage
    \ \
  \newpage
  \ \
    \newpage
  \ \
  \newpage
  \ \
  \newpage

\bibliographystyle{spbasic}

\bibliography{TukeyConj_bib}

\end{document}